\numberwithin{equation}{section}
\newtheorem{theorem}{Theorem}[section]
\newtheorem{prop}{Proposition}[section]
\newtheorem{remark}{Remark}[section]
\newtheorem{question}{Question}[section]
\newcommand\pd{{\partial}}
\newcommand\pt{\partial_{\mathcal{T}}}
\newcommand{\R}{{\mathbb{R}}}
\newcommand\yinf{y\rightarrow+\infty}
\newcommand\zinf{z\rightarrow+\infty}
\newcommand\liy{\lim\limits_{y\rightarrow+\infty}}
\newcommand\ka{\kappa}
\newcommand\ep{\epsilon}
\newcommand\epz{\epsilon\rightarrow0}
\newcommand\sep{\sqrt{\epsilon}}
\newcommand\bu{\textbf{u}}
\newcommand\bU{\textbf{U}}
\newcommand\bb{\textbf{b}}
\newcommand\ta{\theta}
\newcommand\Ta{\Theta}
\newcommand\tta{\tilde\theta}
\renewcommand{\theequation}{\thesection.\arabic{equation}}
\begin{document}

\title[Thermal layer for inviscid compressible flows]
{Well-posedness of thermal layer equations for inviscid compressible flows}


\author{C.-J. Liu}
\address{Chengjie Liu
\newline\indent
School of Mathematical Sciences, Shanghai Jiao Tong University,
Shanghai, P. R. China
\newline\indent
and Department of Mathematics, City University of Hong Kong,
Hong Kong, P. R. China}
\email{cjliusjtu@gmail.com}

\author{Y.-G. Wang}
\address{Ya-Guang Wang
\newline\indent
School of Mathematical Sciences, MOE-LSC and SHL-MAC, Shanghai Jiao Tong University,
Shanghai, 200240, P. R. China}
\email{ygwang@sjtu.edu.cn}

\author{T. Yang}
\address{Tong Yang
\newline\indent
School of Mathematical Sciences, Shanghai Jiao Tong University,
Shanghai, P. R. China
\newline\indent
and Department of mathematics, City University of Hong Kong,
Hong Kong, P. R. China}
\email{matyang@cityu.edu.hk}


\subjclass[2000]{35M13, 35Q35, 76D03, 76D10, 76N20}

\date{}

\keywords{thermal layer, inviscid compressible flow,  well-posedness, 
non-monotonic velocity fields, stability of shear flows.}

\begin{abstract}
	A semi-explicit formula of solution to the boundary layer system for
	thermal layer derived from the
	compressible Navier-Stokes equations with the non-slip boundary condition
	when the viscosity coefficients vanish is given, in particular
	in three space dimension. In contrast to
	the inviscid Prandtl system studied by \cite{H-H} in two space dimension,
	the main difficulty comes
	from the coupling of the velocity field and the temperature field through a degenerate parabolic equation. The convergence of these boundary layer equations to the
	inviscid Prandtl system is justified when the initial temperature goes to a constant. Moreover, the time asymptotic
	stability of the linearized system around a shear flow is given, and in particular,
	 it shows that in three space dimension, the asymptotic stability depends on whether the direction of tangential velocity field of the shear flow is invariant in the normal direction respective to the boundary.
\end{abstract}

\maketitle

\tableofcontents


\section{Introduction}

There has been extensive study on the Prandtl equations since
Prandtl introduced in \cite{prandtl} to describe the behavior of flows near physical boundaries in viscous flows in 1904.
 The well-posedness theory and ill-posedness results obtained by Oleinik and her collaborators (\cite{O, Ole}), and Gerard-Varet, Dormy \cite{GD}, Grenier \cite{GR},  and Guo, Nguyen \cite{GN1} respectively,
  show
that the monotonicity of the tangential velocity in the normal direction
to the boundary plays an essential role in the well-posedness of the
Prandtl equations even locally in time. On the other hand, as observed by
van Dommnelen and Shen \cite{van} and studied mathematically by Hong and Hunter \cite{H-H}, the monotonicity
condition is not needed for the well-posedness of the inviscid Prandtl
equations at least locally in time.

This paper aims to study the
corresponding boundary layer problem derived from compressible Navier-Stokes
equations when the viscosity coefficients vanish or
are of higher order with respect to the heat conductivity coefficient,
i.e. the Prandtl number ${\rm Pr}$ is strictly smaller than one.
The results obtained in this paper
not only reveal the role of the temperature played in this boundary
layer system, but also reveal the phenomena in three space dimensions
that are different from those obtained by Hong and Hunter \cite{H-H} for two dimensional
inviscid Prandtl equations.

Precisely, we consider the following initial-boundary value problem in $\{(t,x',y):~t>0,x'\in\R^{d-1},y\in\R_+\}$ with $d=2,3$:
\begin{equation}\label{pr_invis}\begin{cases}
		\pd_t \bu_h+(\bu_h\cdot\nabla_h+u_d\pd_y)\bu_h=0,\\
		\pd_t \ta+(\bu_h\cdot\nabla_h+u_d\pd_y)\ta
		=\frac{\ka}{P}\ta \pd_y^2\ta+\frac{\ka P_t}{P}\ta,\\
		\nabla_h\cdot\bu_h+\pd_y u_d=\frac{\ka}{P} \pd_y^2\ta-\frac{(1-\ka)P_t}{P},\\
		(u_d,\ta)|_{y=0}=\big(0,\ta^0(t,x')\big),
		\quad\lim\limits_{\yinf}\ta(t,x,y)=\Ta(t,x'),\\
		(\bu_h,\ta)|_{t=0}=(\bu_{h0},\ta_0)(x',y),
	\end{cases}\end{equation}
	where $x'=(x_1,\cdots,x_{d-1}),~\nabla_h=(\pd_{x_1},\cdots,\pd_{x_{d-1}})^T;$ $\bu_h=(u_1,\cdots,u_{d-1})^T\in\R^{d-1}$ is unknown vector function, $u_d$ and $\ta$ are unknown scalar functions;    $P=P(t)$ and $\Ta(t,x')$ are positive known functions, and $\kappa>0$ is a constant.
	The above problem \eqref{pr_invis} discribes the behavior of boundary layer for inviscid compressible non-isentropic flow, as the heat conductivity tends to zero,
and the behavior of thermal layer for the compressible Navier-Stokes equations with nonslip boundary condition on velocity when the viscosity tends to zero faster than the heat conductivity.
The formal derivation of \eqref{pr_invis} will be given in the Appendix.

When the first equation of \eqref{pr_invis} has an additional diffusion term $\partial_y^2\bu_h$ on the right hand side, which describes the boundary layer behavior of the compressible Navier-Stokes equations, we
 have studied
  the well-posedness of this boundary layer problem in \cite{LWY4}, in  two space dimensions, under the usual monotonic condition on the tangential velocity with respect to the normal direction to the boundary, as for the classical incompressible Prandtl equations (\cite{O, Ole, AWXY, WXY, Mas-Wong, GR, GD}). In this paper,
motivated by the work of Hong and Hunter \cite{H-H}, we are going to study the problem \eqref{pr_invis} without the monotonicity of the tangential velocity.

	When the pressure of the outer flow is a function of time $t$ only, we will first
	give a semi-explicit formula for the solution to problem \eqref{pr_invis} in
	the next section.
In Subsection 2.2, we obtain that the velocity field of \eqref{pr_invis} converges to that of
the inviscid Prandtl system when the temperature
	tends to a constant state.
	And then in Section 3,  we will study the linearized system of \eqref{pr_invis}
	around a shear flow. In three space dimensions, it will be shown that
the
	solution to the linearized system is bounded for all positive $t$ when the tangential velocity direction of the background shear flow is independent of the normal direction to the boundary and one component of the tangential velocity is strictly monotonic with respect to the normal variable, and it grows like $\sqrt{t}$ when the
	two tangential components of the shear flow is not linearly dependent,
or they are linearly dependent and one component
has a non-degenerate critical point.

\section{Study of the nonlinear thermal layer problem}

\subsection{Local existence of classical solutions}

Before stating the local existence result, we first give some notations.
Denote by $I_{k}$ the $k\times k$ identity matrix for some integer $k$,  $det(A)$ the determinant of a matrix $A$, $\nabla_h u(x',\cdot)$ the gradient of a function $u$ with respect to the variables $x'\in \R^{d-1}$.
By using the intial data of the problem \eqref{pr_invis}, we introduce the vector function $\xi(t,x',z)\in\R^{d-1}$, defined by the following equation
\begin{equation}\label{2.1}
x'=\xi+t\bu_{h0}(\xi,z),
\end{equation}
and then, the functions $a(t,x',z)$  and $\bb(t,x',z)$ are defined as:
\begin{equation}\label{notation}
a(t,x',z):=\frac{P(t)}{P(0)}\ta_0\big(\xi(t,x',z),z\big)\cdot det(I_{d-1}+t\nabla_h\bu_{h0})\big(\xi(t,x',z),z\big),\quad \bb(t,x',z):=\bu_{h0}\big(\xi(t,x',z),z\big).
\end{equation}

Then, we have the following local existence of a classical solution to the problem \eqref{pr_invis}, in which no monotonicity condition is required on the initial data.
\begin{theorem}\label{thm-2-1}
Suppose that the data given in \eqref{pr_invis},  $\bu_{h0}\in C^2,\ta_0\in C^2,\ta^0\in C^1,P\in C^1$ and $\Ta\in C^1$ satisfy
the compatibility conditions of \eqref{pr_invis} up to order one, and
\begin{equation}\label{ass_ini}
t^*:=
\sup
\Big\{t: \inf\limits_{(x',y)\in\R^{d}_+}det\big(I_{d-1}+s\nabla_h\bu_{h0}(x',y)\big)>0,~\forall s\in[0,t]\Big\}>0.
\end{equation}
Also, there exists a positive constant $C_0$ such that for $t\in[0,t^*)$ and
$(x',y)\in\R_+^{d},$
\begin{equation}\label{ass_ta}
\begin{cases}
C_0^{-1}\leq \ta_0(x',y),~\ta^0(t,x'),~\Ta(t,x'),~P(t)\leq C_0,\\[3mm]
\|\bu_{h0}\|_{C^2}\le C_0,\quad \|\ta_0\|_{C^1}\leq C_0.	
\end{cases}
\end{equation}
Then, there exist a $t_0\in (0, t^*]$ and
a unique classical solution to \eqref{pr_invis} in $[0,t_0)\times\R^d_+$ given by
\begin{equation}\label{sol_invis}\begin{split}
&\bu_h(t,x',y)~=~\bu_{h0}\Big(\xi\big(t,x',\eta(t,x',y)\big),\eta(t,x',y)\Big),\\
& u_d(t,x',y)~=~\int_0^{\eta(t,x',y)}\pd_t(\frac{\tta}{a})(t,x',z)dz+\int_0^{\eta(t,x',y)}\Big[\bb\big(t,x',\eta(t,x',y)\big)\cdot\nabla_h(\frac{\tta}{a})(t,x',z)\Big]dz,\\
&\ta(t,x',y)~=~ \tta\big(t,x',\eta(t,x',y)\big).
\end{split}\end{equation}
Here, $a(t,x',z)$ and $\bb(t,x',z)$ are given by \eqref{notation}, 
$\tta(t,x',z)$ is a positive smooth solution to the following problem in
$[0,t_0)\times \R_+^d$:
\begin{equation}\label{pr_tta}\begin{cases}
\pd_t\tta+\bb\cdot\nabla_h\tta-\frac{\ka P_t}{P}\tta-\frac{\ka a}{P}\pd_z\big(\frac{a}{\tta}\pd_z\tta\big)=0,\\
\tta|_{z=0}=\ta^0(t,x'),
\quad \lim\limits_{\zinf}\tta=\Ta(t,x'),\\
\tta|_{t=0}=\ta_0(x',z),
\end{cases}\end{equation}
and the function $\eta(t,x',y)$ is defined
implicitly
 by the relation
\begin{equation}\label{tran_y}
y~=~\int_0^\eta\frac{\tta(t,x',z)}{a(t,x',z)}dz.
\end{equation}

\end{theorem}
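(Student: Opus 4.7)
The overall strategy is to reduce the coupled system \eqref{pr_invis} to the scalar quasilinear parabolic problem \eqref{pr_tta} via Lagrangian-type changes of variable, and then verify the resulting formulas by direct computation. First, I would observe that the $\bu_h$-equation is a pure transport equation: the operator $\pd_t+\bu_h\cdot\nabla_h+u_d\pd_y$ is the material derivative along fluid trajectories, so $\bu_h$ is constant along them. Parametrising the trajectories by their initial position $(\xi,z)\in\R^{d-1}\times\R_+$, the horizontal projection satisfies $\dot{x'}=\bu_{h0}(\xi,z)$, which integrates to the affine relation \eqref{2.1}; assumption \eqref{ass_ini} combined with the implicit function theorem then produces $\xi=\xi(t,x',z)\in C^2$ on $[0,t^*)\times\R^d_+$, so $\bu_h(t,x',y)=\bu_{h0}(\xi,z)$ whenever $z$ is the normal Lagrangian label of the point $(t,x',y)$.

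To handle the normal variable together with the compressibility, I would change the normal coordinate from $y$ to $z$ via \eqref{tran_y}. The function $a(t,x',z)$ in \eqref{notation} is designed so that $\tta/a$ plays the role of the specific volume $1/\rho$ (via the equation of state $\rho\ta\sim P$), while the factor $\det(I_{d-1}+t\nabla_h\bu_{h0})$ accounts for the tangential Jacobian of \eqref{2.1}; consequently $y=\int_0^\eta \tta/a\,dz$ is the physical height as seen by a Lagrangian observer. A chain-rule computation then converts $\pd_y$ into $(a/\tta)\pd_z$ and the material derivative acting on a function $f(t,x',z)$ into $\pd_tf+\bb\cdot\nabla_hf$; substituting this in the temperature equation of \eqref{pr_invis} and using the continuity equation produces exactly the quasilinear parabolic problem \eqref{pr_tta} for $\tta(t,x',z)$.

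The reduced problem \eqref{pr_tta} is parabolic in $z$ with $x'$ entering only through the transport $\bb\cdot\nabla_h$. I would solve it by performing a further Lagrangian substitution in $x'$ (well-defined on $[0,t^*)$ by \eqref{ass_ini}) which removes this transport, leaving a one-dimensional quasilinear parabolic problem in $z$ with $(t,\xi)$ acting as smooth parameters. Standard theory (e.g.\ Ladyzhenskaja-Solonnikov-Ural'ceva, or a Schauder fixed-point on a linearisation) then gives a local-in-time classical solution, and the maximum principle combined with the positivity bounds of \eqref{ass_ta} guarantees $\tta\geq(2C_0)^{-1}$ on some $[0,t_0)$ with $t_0\in(0,t^*]$, keeping the diffusion coefficient $\ka a^2/(P\tta)$ uniformly positive.

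Finally, the positivity of $\tta$ and $a$ makes $\eta\mapsto y=\int_0^\eta \tta/a\,dz$ a smooth diffeomorphism of $\R_+$ onto itself, so the inverse $\eta(t,x',y)$ is smooth. Setting $\ta=\tta(t,x',\eta)$ and defining $\bu_h$ as in the first line of \eqref{sol_invis}, one recovers $u_d$ by integrating the third equation of \eqref{pr_invis} from the boundary $y=0$ (where $u_d$ vanishes); after the coordinate change this yields precisely the formula for $u_d$ in \eqref{sol_invis}. Checking that the resulting triple solves \eqref{pr_invis} and attains the prescribed boundary and initial data is then a direct verification using the construction and the compatibility hypotheses. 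I expect the main technical obstacle to be the well-posedness of \eqref{pr_tta} on the unbounded normal half-line with the $x'$-dependent far-field state $\Ta(t,x')$: suitable barrier functions and weighted estimates are required to control $\tta-\Ta$ as $z\to\infty$ so that the chain-rule manipulations above are entirely rigorous.
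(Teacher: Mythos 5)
Your proposal follows essentially the same Lagrangian/characteristics route as the paper: transport the tangential velocity along particle trajectories, then change the normal variable according to \eqref{tran_y} to reduce the temperature equation to the quasilinear degenerate parabolic problem \eqref{pr_tta}, solve that, and verify the recovered triple solves \eqref{pr_invis}. The one place you gloss over what the paper carries out carefully is the justification of \eqref{tran_y} and the formula for $u_d$: you motivate the change of normal coordinate heuristically (``$\tta/a$ plays the role of specific volume''), and propose to recover $u_d$ by integrating the divergence equation, whereas the paper derives both from a computation of the Lagrangian Jacobian $J=\pd(x',y)/\pd(\xi,\eta)$: using the divergence equation to get $\pd_\tau J = J\big(\frac{\ka}{P}\overline{\pd_y^2\ta}-(1-\ka)\frac{P_\tau}{P}\big)$ and combining with the equation for $\bar\ta$ yields the closed-form expression $J=\frac{P(0)}{P(\tau)\ta_0}\bar\ta$, from which the identity $\pd_\eta y=\tta/a$ (hence \eqref{tran_y}) follows directly, and $u_d=y_\tau$ gives the stated formula without any further decompression. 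If you instead integrate the divergence equation directly as proposed, you will still need precisely this Jacobian identity to bring the result into the form \eqref{sol_invis}, so the hidden step is the same in both treatments; you simply postpone it to ``direct verification'' while the paper makes it the centerpiece of Step (2). No genuine gap, just a relocation of where the key computation happens.
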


\begin{proof}[\bf{Proof.}]

We shall use the method of characteristics, introduced in \cite{H-H} for the inviscid Prandtl equations, to get the solution
formula \eqref{sol_invis}
for the problem \eqref{pr_invis}.

(1) Suppose that $(\bu_h,u_d,\ta)(t,x',y)$ is a smooth solution to the problem \eqref{pr_invis}, we introduce characteristic coordinates:
\begin{equation}\label{tran}
t=\tau,~x'=x'(\tau,\xi,\eta),~y=y(\tau,\xi,\eta)
\end{equation}
being determined by
solving the problems,
\begin{equation}\label{pr_char}\begin{cases}
\frac{\partial}{\partial\tau}x'(\tau,\xi,\eta)~=~\bu_h
\big(\tau,x'(\tau,\xi,\eta),y(\tau,\xi,\eta)\big),\\
\frac{\partial }{\partial\tau}y(\tau,\xi,\eta)~=~u_d\big(\tau,x'(\tau,\xi,\eta),y(\tau,\xi,\eta)\big),\\
x'(0,\xi,\eta)~=\xi, ~y(0,\xi,\eta)~=~\eta,
\end{cases}\end{equation}
with $\xi=(\xi_1,\cdots,\xi_{d-1})^T\in\R^{d-1}$.

We denote by
\begin{equation}\label{new_fun}
(\bar \bu_h,\bar u_d,\bar\ta)(\tau,\xi,\eta)~:=~(\bu_h,u_d,\ta)\big(\tau,x'(\tau,\xi,\eta),y(\tau,\xi,\eta)\big),
\end{equation}
then, it is easy to deduce from \eqref{pr_invis} and the relation \eqref{pr_char} that $
(\bar \bu_h,\bar\ta)(\tau,\xi,\eta)$ satisfy the following problem:
\begin{equation}\label{eq_new}\begin{cases}
\pd_\tau\bar \bu_h~=~0,\\
\pd_\tau\bar\ta~=~\frac{\ka}{P(\tau)}\bar\ta ~\overline{\pd_y^2\ta}+\frac{\ka P_\tau(\tau) }{P(\tau)}\bar\ta,\\
(\bar \bu_h,\bar \ta)|_{\tau=0}~=~(\bu_{h0},\ta_0)(\xi,\eta)
\end{cases}\end{equation}
with the notation $\overline{\pd_y^2\ta}(\tau,\xi,\eta)=(\pd_y^2\ta)(\tau,x'(\tau,\xi,\eta),y(\tau,\xi,\eta))$.
We immediately obtain that from \eqref{eq_new},
\begin{equation}\label{formu_u}
\bar \bu_h(\tau,\xi,\eta)~\equiv~\bu_{h0}(\xi,\eta),
\end{equation}
which implies that by plugging \eqref{formu_u} into \eqref{pr_char},
\begin{equation}\label{formu_x}
x'~=~\xi+\tau \bu_{h0}(\xi,\eta).
\end{equation}
It is easy to see that the relation \eqref{formu_x} determines uniquely $\xi=\xi(\tau,x',\eta)$ when $0\leq\tau\le t^*$,
with $t^*>0$ being given in \eqref{ass_ini}.

(2)
Next, we are going to verify that the relation
$\eta=\eta(t,x',y)$ implicitly defined
 by \eqref{tran}-\eqref{pr_char} obeys the equation \eqref{tran_y}.
Denote by $J(\tau,\xi,\eta)$ the Jacobian of the transformation between $(x',y)$ and $(\xi,\eta)$:
\begin{equation}\label{j1}
J(\tau,\xi,\eta)~:=~\frac{\pd(x',y)}{\pd(\xi,\eta)}=\det(\nabla_\xi x')\cdot\pd_\eta y-\sum\limits_{i=1}^{d-1}\big[det(\nabla_i x')\cdot\pd_{\xi_i}y\big],\end{equation}
with the notation
\[
\nabla_i=( \cdots,\pd_{\xi_{i-1}},\pd_\eta,\pd_{\xi_{i+1}},\cdots)^T,\qquad 1\leq i\leq d-1.\]
By a direct computation and using \eqref{pr_char}, we get
\[
\pd_\tau J(\tau,\xi,\eta)=
J(\tau,\xi,\eta)\cdot(\nabla_h\cdot\bu_h+\pd_yu_d)
\big(\tau,x'(\tau,\xi',\eta),y(\tau,\xi',\eta)\big)
\]
which gives rises to
\begin{equation}\label{j}
\pd_\tau J(\tau,\xi,\eta)=
J(\tau,\xi,\eta)
\cdot\Big[\frac{\ka}{P(\tau)}\overline{\pd_y^2\ta}(\tau,\xi,\eta)-
(1-\ka)\frac{P_\tau(\tau)}{P(\tau)}\Big]
\end{equation}
by using
the third equation
 given in \eqref{pr_invis}. Note that $J(0,\xi,\eta)=1$,
thus combining \eqref{j} with the second equation given in \eqref{eq_new} we deduce
\begin{equation}\label{Jaco}
J(\tau,\xi,\eta)~=~\frac{P(0)}{P(\tau)\ta_0(\xi,\eta)}\bar\ta(\tau,\xi,\eta).
\end{equation}

Noting that
\[\det(\nabla_\xi x')(\tau,\xi,\eta)=det\Big(I_{d-1}+\tau\nabla_\xi \bu_{h0}(\xi,\eta)\Big)>0, \quad{\rm for}~\tau\leq t^*,\]
plugging \eqref{Jaco} into \eqref{j1} yields that
\begin{equation}\label{Jaco1}
\pd_\eta y-\sum\limits_{i=1}^{d-1}\Big[\frac{det(\nabla_i x')}{det(\nabla_\xi x')}\cdot\pd_{\xi_i}y\Big]=\frac{P(0)}{P(\tau)\ta_0(\xi,\eta)\cdot det(\nabla_\xi x')(\tau,\xi,\eta)}\bar\ta(\tau,\xi,\eta).
\end{equation}
By a direct calculation, it deduces that the characteristics of the equation \eqref{Jaco1} is $x'=constant$ or $\xi=\xi(\tau,x',\eta)$ given in \eqref{formu_x}.

Denote by
\begin{equation}\label{define_ta}
\tta(\tau,x',\eta)~:=~\bar\ta\big(\tau,\xi(\tau,x',\eta),\eta\big).
\end{equation}
From \eqref{Jaco1}, it follows
\begin{equation}\label{Jaco2}
\frac{\pd}{\pd \eta}y\big(\tau,\xi(\tau,x',\eta),\eta\big)~=~\frac{\tta(\tau,x',\eta)}{a(\tau,x',\eta)},
\end{equation}
where $a(\tau,x',\eta)$ is defined in \eqref{notation}.
Moreover, as $u_d|_{y=0}=0$, from \eqref{pr_char} we have $y=0$ when $\eta=0.$ Therefore, integrating \eqref{Jaco2} along characteristics, we obtain
\begin{equation}\label{formu_y}
y~=~y\big(\tau,\xi(\tau,x',\eta),\eta\big)~=~\int_0^\eta\frac{\tta(\tau,x',z)}{a(\tau,x',z)}dz.
\end{equation}
Consequently, when $0\leq\tau \le t^*$ and $\tta>0$, we have that $a>0$ from the definition \eqref{notation}, thus by using
\begin{equation*}
\pd_\eta y=\frac{\tta(\tau,x',\eta)}{a(\tau,x',\eta)}>0,
\end{equation*}
 the equation \eqref{formu_y} is invertible and  gives $\eta=\eta(\tau,x',y)$ with
 \begin{equation}\label{deri_eta}
\eta_y=\frac{a(\tau,x',\eta)}{\tta(\tau,x',\eta)}>0.
\end{equation}
Also, the domain $\{y>0\}$ is changed as $\{\eta>0\}$ with the boundary $\{y=0\}$, $y\rightarrow+\infty$ respectively, being changed as $\{\eta=0\},$ $\eta\rightarrow+\infty$ respectively.

(3) Now, we will derive the formula \eqref{sol_invis} and the problem \eqref{pr_tta} for $\tta(\tau,x',\eta)$. Note that the inverse function of
$x'=x'(\tau,\xi,\eta), ~y=y(\tau,\xi,\eta)$ given by \eqref{formu_x} and \eqref{formu_y},
 is
\[\Big(\xi\big(\tau,x',\eta(\tau,x',y)\big),\eta(\tau,x',y)
\Big).\]
Thus, combining \eqref{new_fun}, \eqref{formu_u} and \eqref{define_ta} yields that
\[
\bu_h(\tau,x',y)=\bu_{h0}\Big(\xi\big(\tau,x',\eta(\tau,x',y)\big),\eta(\tau,x',y)\Big),\quad \ta(\tau,x',y)=\tta\big(\tau,x',\eta(\tau,x',y)\big),
\]
which implies the formulas of $\bu_h(t,x',y)$ and $\ta(t,x',y)$ given in \eqref{sol_invis}.
Denote by
\[\tilde y(\tau,x',\eta)~:=~\int_0^\eta\frac{\tta(\tau,x',z)}{a(\tau,x',z)}dz,
\]
then from \eqref{formu_y} and \eqref{formu_x} we have $y(\tau,\xi,\eta)=\tilde y\big(\tau,\xi+\tau \bu_{h0}(\xi,\eta),\eta\big)$, 
which yields that
\begin{equation}\label{tv}
y_\tau(\tau,\xi,\eta)=\pd_\tau\tilde y\big(\tau,\xi+\tau \bu_{h0}(\xi,\eta),\eta\big)+\bu_{h0}(\xi,\eta)\cdot\nabla_h\tilde y\big(\tau,\xi+\tau \bu_{h0}(\xi,\eta),\eta\big).
\end{equation}
Combining \eqref{pr_char} with \eqref{tv}, we get that
\[\begin{split}
&u_d\big(\tau,x'(\tau,\xi,\eta),y(\tau,\xi,\eta)\big)=y_\tau(\tau,\xi,\eta)\\
&=\int_0^\eta\pd_\tau(\frac{\tta}{a})\big(\tau,\xi+\tau \bu_{h0}(\xi,\eta),z\big)dz+\int_0^\eta\bu_{h0}(\xi,\eta)\cdot\nabla_h(\frac{\tta}{a})\big(\tau,\xi+\tau \bu_{h0}(\xi,\eta),z\big)dz,
\end{split}\]
which implies the formula of $u_d(t,x,y)$ given in \eqref{sol_invis} by using that \eqref{formu_x} and \eqref{formu_y}.
Next, from \eqref{define_ta} and the relation \eqref{formu_x} we have
\[\bar\ta(\tau,\xi,\eta)~=~\tta\Big(\tau,\xi+\tau\bu_{h0}(\xi,\eta),\eta\Big),\]
which implies that,
\[\begin{split}
&\pd_\tau\bar\ta=\pd_\tau\tta+\bu_{h0}\cdot\nabla_{h}\tta,
\qquad\nabla_\xi\bar\ta=\big(I_{d-1}+\tau\nabla_\xi\bu_{h0}\big)\cdot\nabla_h\tta,\\
&\pd_\eta\bar\ta=\pd_\eta\tta+\tau\pd_\eta\bu_{h0}\cdot\nabla_h\tta.
\end{split}\]
Moreover, from \eqref{formu_x} it follows that
\[\big(I_{d-1}+\tau\nabla_\xi\bu_{h0}\big)\cdot\xi_\eta+\tau\pd_\eta\bu_{h0}=0,\]
thus we obtain that by virtue of \eqref{deri_eta},
\[
\begin{split}
\pd_y\bar\ta\big(\tau,\xi(\tau,x',\eta),\eta\big)
& = \Big[(\xi_\eta\cdot\nabla_\xi+\pd_\eta)\bar\ta\Big]
\big(\tau,\xi(\tau,x',\eta),\eta\big)\cdot\eta_y\\
&=(\pd_\eta\bar\ta-\tau\pd_\eta
\bu_{h0}\cdot \nabla_h\tta)\frac{a(\tau,x',\eta)}{\tta(\tau,x',\eta)}\\
&=\pd_\eta\tta(\tau,x',\eta)\cdot
\frac{a(\tau,x',\eta)}{\tta(\tau,x',\eta)}.
\end{split}\]
Therefore, the problem for $\bar\ta$ given in \eqref{eq_new} can be reduced as follows,
\[\begin{cases}
\pd_\tau\tta+\bu_{h0}\big(\xi(\tau,x',\eta),\eta\big)\cdot\nabla_h\tta=\frac{\ka a(\tau,x',\eta)}{P(\tau)}\pd_\eta\Big(\frac{a(\tau,x',\eta)}{\tta}\pd_\eta\tta\Big)+\frac{\ka P_\tau(\tau)}{P(\tau)}\tta,\quad {\rm in}~[0,t^*)\times\R_+^d,\\
\tta|_{\tau=0}=\ta_0(x',\eta).
\end{cases}\]
Furthermore, from the boundary conditions of $\ta$ given in \eqref{pr_invis}, we get
\[\tta|_{\eta=0}=\ta^0(\tau,x'),
\quad \lim\limits_{\eta\rightarrow+\infty}\tta(\tau,x',\eta)=\Ta(\tau,x'),\]
so we obtain the problem \eqref{pr_tta} for $\tta(\tau,x',\eta).$ Then, by the following Proposition \ref{lemma_ta}, we know that the problem \eqref{pr_tta} admits a unique classical solution in $[0,t_0)\times\R^d_+$ for some $0<t_0\leq t^*$. Finally, One can check directly that \eqref{sol_invis}-\eqref{tran_y} defines a smooth solution to the problem \eqref{pr_invis}.

\end{proof}

\begin{remark}
From \eqref{sol_invis}  and \eqref{tran_y} with the definition of the function $a(t,x',z) $ given in \eqref{notation}, one can see that there may be a loss of derivatives in the tangential variables $x'$ for the solution of \eqref{pr_invis},
 with respect to the regularity of the initial data.
\end{remark}


\begin{prop}\label{lemma_ta}
	Under the assumption of Theorem \ref{thm-2-1},  there is a time $0<t_0\leq t^*$ such that the problem \eqref{pr_tta} has a unique classical solution $\tta(t,x',z)$ with bounded derivatives in $[0,t_0)\times\R^d_+,$ satisfying that
	\begin{equation*}
		C_0^{-1-2\ka}\leq \tta(t,x',z) \leq C_0^{1+2\ka},
	\end{equation*}
for the constant $C_0$ given in \eqref{ass_ta}.
\end{prop}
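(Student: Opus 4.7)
The plan is to obtain pointwise bounds via a maximum principle after a judicious change of unknown, and then construct the solution via a linearized iteration. Concretely, I would first set $\phi(t,x',z):=\tta(t,x',z)/P(t)^\ka$; direct substitution eliminates the zeroth-order term in \eqref{pr_tta} and yields
\begin{equation*}
\pd_t\phi+\bb\cdot\nabla_h\phi=\frac{\ka a}{P^{\ka+1}}\pd_z\!\Big(\frac{a}{\phi}\pd_z\phi\Big),
\end{equation*}
with $\phi|_{z=0}=\ta^0/P^\ka$, $\lim_{\zinf}\phi=\Ta/P^\ka$, and $\phi|_{t=0}=\ta_0/P(0)^\ka$, all lying in $[C_0^{-1-\ka},C_0^{1+\ka}]$ by \eqref{ass_ta}. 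Expanding the right-hand side, at any interior maximum $\pd_z\phi=0$ and $\pd_z^2\phi\le 0$ force it to be $\le 0$, hence $\pd_t\phi\le 0$; analogous signs hold at interior minima. A standard maximum principle argument then gives $\phi\in[C_0^{-1-\ka},C_0^{1+\ka}]$, and multiplying by $P^\ka\in[C_0^{-\ka},C_0^\ka]$ produces $C_0^{-1-2\ka}\le\tta\le C_0^{1+2\ka}$.

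For the construction, note that under the hypotheses of Theorem \ref{thm-2-1}, the coefficients $\bb$ and $a$ from \eqref{notation} are $C^1$ on $[0,t_0]\times\R^d_+$ for any $t_0<t^*$; moreover, by the definition of $t^*$ together with continuity of $s\mapsto\inf_{(x',y)}\det(I_{d-1}+s\nabla_h\bu_{h0})$, $a$ admits positive upper and lower bounds on such a strip. I would then iterate the linear Dirichlet problem
\begin{equation*}
\pd_t\tta^{n+1}+\bb\cdot\nabla_h\tta^{n+1}-\frac{\ka P_t}{P}\tta^{n+1}=\frac{\ka a}{P}\pd_z\!\Big(\frac{a}{\tta^{n}}\pd_z\tta^{n+1}\Big),
\end{equation*}
with the same data as \eqref{pr_tta}, starting from a smooth $\tta^{(0)}$ matching the data and lying in $[C_0^{-1-2\ka},C_0^{1+2\ka}]$. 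Whenever $\tta^n$ stays in this range the equation is uniformly parabolic with smooth bounded coefficients, so classical linear parabolic theory produces a unique smooth $\tta^{n+1}$; the argument of the previous paragraph applied to $\tta^{n+1}/P^\ka$ propagates the pointwise bounds to the next iterate.

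The main obstacle is closing the iteration. The difference $w^{n+1}:=\tta^{n+1}-\tta^{n}$ satisfies a linear parabolic equation with zero initial and boundary data and source $-\frac{\ka a}{P}\pd_z\!\big(\frac{a\,w^{n}\pd_z\tta^{n}}{\tta^{n}\tta^{n-1}}\big)$. Multiplying by $w^{n+1}$, integrating by parts in $\R^d_+$, using the uniform lower bound on $\tta^n,\tta^{n-1}$ to control the diffusive term from below, and using Young's inequality to absorb any $\pd_z w^{n+1}$ factors appearing on the right, Gronwall's lemma should give $\|w^{n+1}\|_{L^\infty([0,t_0];L^2)}\le (Ct_0)^{1/2}\|w^{n}\|_{L^\infty([0,t_0];L^2)}$, hence contraction for $t_0$ small. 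Uniform higher-order estimates on the iterates, obtained by differentiating the linear equation and running standard energy or parabolic Schauder estimates (closing thanks to the uniform positive lower bound on $\tta^n$), then upgrade the limit to a classical solution with bounded derivatives, and the same difference estimate applied to two solutions yields uniqueness.
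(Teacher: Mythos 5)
Your change of variable $\phi=\tta/P^\kappa$ and the target pointwise bounds match what the paper does, but the construction and the a priori estimate both rest on a structural mischaracterization of the equation. The linearized problem
\[
\pd_t\tta^{n+1}+\bb\cdot\nabla_h\tta^{n+1}-\tfrac{\ka P_t}{P}\tta^{n+1}
=\tfrac{\ka a}{P}\pd_z\!\Big(\tfrac{a}{\tta^{n}}\pd_z\tta^{n+1}\Big)
\]
is \emph{not} uniformly parabolic, even when $\tta^n$ is bounded in the correct range: there is diffusion in $z$ only, and pure transport in the $(d-1)$ tangential variables $x'$. So ``classical linear parabolic theory'' does not furnish existence, uniqueness, or Schauder-type estimates for each step of the iteration; one must appeal to the theory of \emph{degenerate} parabolic equations. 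This is precisely the point the paper addresses by invoking Ladyzenskaja--Solonnikov--Uraltseva once for the regularized problem \eqref{pr_aux}, after replacing $1/\hat\ta$ by a globally smooth, bounded function $\varphi(\hat\ta)$ that coincides with $1/\hat\ta$ on the target interval. That regularization also resolves a subtle circularity in your scheme: before you know $\tta^n$ stays in the right range, the quasilinear coefficient $a/\tta^n$ need not be bounded, so the step that ``produces a unique smooth $\tta^{n+1}$'' is not yet justified.

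The maximum-principle argument has a gap of a similar kind. The spatial domain $\R^{d-1}\times\R_+$ is unbounded in $x'$, and the problem \eqref{pr_hta} imposes no control on $\phi$ as $|x'|\to\infty$; an interior maximum need not be attained, and the degenerate directions make even a perturbed ($\phi\mp\epsilon t$) version of the argument delicate. The paper sidesteps both difficulties with an $L^2$-energy argument: multiply by $(C_0^{-1-\ka}-\ta)_+$ and integrate over $\R^d_+$, which is indifferent to where an extremum sits and does not use pointwise second-derivative sign information. Your overall strategy (change of variable, pointwise bounds, linearized iteration with contraction) could in principle be repaired, but you would need to (i) cite and work within linear degenerate parabolic theory for the iterates, (ii) replace the pointwise maximum-principle step by a weighted $L^2$ or comparison argument valid on the unbounded strip, and (iii) either build the lower/upper bounds into a truncated nonlinearity (as the paper does with $\varphi$) or prove them a priori before solving the quasilinear problem.
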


\begin{proof}[\bf{Proof.}]
	Set
$$\hat{\ta}(t,x',z)=\frac{\tta(t,x',z)}{[P(t)]^\kappa}.$$
From the problem \eqref{pr_tta}, we know that $\hat{\ta}(t,x',z)$ satisfies the following initial-boundary value problem,
	\begin{equation}\label{pr_hta}\begin{cases}
	\pd_t\hat{\ta}+\bb\cdot\nabla_h\hat{\ta}-\frac{\ka a}{P^{1+\ka}}\pd_z\big(\frac{a}{\hat{\ta}}\pd_z\hat{\ta}\big)=0,\\
	\hat{\ta}|_{z=0}=\ta^0(t,x')/[P(t)]^\ka,
	\quad \lim\limits_{\zinf}\hat{\ta}=\Ta(t,x')/[P(t)]^\ka,\\
	\hat{\ta}|_{t=0}=\ta_0(x',z)/[P(0)]^\ka.
	\end{cases}\end{equation}

Introduce an auxiliary function $\varphi(s)$ defined for all $s\in\R$, satisfies that $\varphi$ is smooth, $\frac{1}{2}C_0^{-1-\ka}\leq \varphi\leq 2C_0^{1+\ka}$ with the positive constant $C_0$ given in \eqref{ass_ta}, and $\varphi(s)=\frac{1}{s}$ when $s\in\Big[C_0^{-1-\ka},C_0^{1+\ka}\Big]$. Then, corresponding to the problem
\eqref{pr_hta},
 we consider the following initial-boundary value problem,
	\begin{equation}\label{pr_aux}\begin{cases}
	\pd_t\ta+\bb\cdot\nabla_h\ta-\frac{\ka a}{P^{1+\ka}}\pd_z\big(a\varphi(\ta)\pd_z\ta\big)=0,\\
	\ta|_{z=0}=\ta^0(t,x')/[P(t)]^\ka,
	\quad \lim\limits_{\zinf}\ta=\Ta(t,x')/[P(t)]^\ka,\\
	\ta|_{t=0}=\ta_0(x',z)/[P(0)]^\ka.
	\end{cases}\end{equation}
Noting that the equation in \eqref{pr_aux} is degenerate parabolic with smooth coefficients, by employing the classical theory of degenerate parabolic equations (cf. \cite{solo}), we conclude there is a classical solution to the
problem
 \eqref{pr_aux} in $[0,t_0)\times\R^d_+$ for some $0<t_0\leq t^*.$

 Obviously, when the solution $\ta$ to \eqref{pr_aux} satisfies
\begin{equation}\label{bound_ta}
	C_0^{-1-\ka}\leq \ta(t,x',y)\leq C_0^{1+\ka},
\end{equation}
the problem \eqref{pr_aux} coincides with the one given in \eqref{pr_hta}. Thus, it suffices to verify \eqref{bound_ta} being true in the following.

To prove the lower bound of the solution $\ta(t,x',y)$ given in \eqref{bound_ta}, letting $r_+\triangleq\max\{r,0\}$, multiplying the equation of \eqref{pr_aux} by $\big(C_0^{-1-\ka}-\ta\big)_+$ and integrating over $\R_+^d$, it gives that by integration by parts,
\begin{equation}\label{low-bound}
\begin{split}
&-\frac{d}{2dt}\int_{\R_+^d}\big(C_0^{-1-\ka}-\ta\big)_+^2dx'dz+\int_{\R^d_+}\big[(\nabla\cdot \bb)\big(C_0^{-1-\ka}-\ta\big)_+^2\big]dx'dz\\
&=\int_{\R^d_+}\frac{\ka}{P^{1+\ka}} a\varphi(\ta)\pd_z\big(C_0^{-1-\ka}-\ta\big)_+\big[a\pd_z\big(C_0^{-1-\ka}-\ta\big)_++a_z\big(C_0^{-1-\ka}-\ta\big)_+\big]dx'dz.
\end{split}\end{equation}

It is obvious that the right hand side of the above equality satisfies
\[\begin{split}
&\int_{\R^d_+}\frac{\ka}{P^{1+\ka}} a\varphi(\ta)\pd_z\big(C_0^{-1-\ka}-\ta\big)_+\big[a\pd_z\big(C_0^{-1-\ka}-\ta\big)_++a_z\big(C_0^{-1-\ka}-\ta\big)_+\big]dx'dz\\
&\geq-\int_{\R^d_+}\frac{\ka\varphi(\ta)}{4P^{1+\ka}} \big[a_z\big(C_0^{-1-\ka}-\ta\big)_+\big]^2dx'dz,
\end{split}\]
thus, from \eqref{low-bound} we have
\[\begin{split}
\frac{d}{dt}\int_{\R_+^d}\big(C_0^{-1-\ka}-\ta\big)_+^2dx'dz&\leq\int_{\R^d_+}\big[2|\nabla\cdot \bb|+\frac{\ka\varphi(\ta)a_z^2}{2P^{1+\ka}}\big]\big(C_0^{-1-\ka}-\ta\big)_+^2dx'dz\\
&\leq C\int_{\R^d_+}\big(C_0^{-1-\ka}-\ta\big)_+^2dx'dz,
\end{split}\]
for a positive constant $C>0$. Applying the Gronwall inequality to the above expression and using that $(C_0^{-1-\ka}-\ta\big)_+|_{t=0}=0$, it yield that
\[
\int_{\R_+^d}\big(C_0^{-1-\ka}-\ta\big)_+^2(t,x',z)dx'dz=0,
\]
which implies that
\[
\ta(t,x',z)~\geq~C_0^{-1-\ka},
\]
and we obtain the lower bound of $\ta$ given in \eqref{bound_ta}.

The upper bound of the solution $\ta$ given in \eqref{bound_ta} can be obtained similarly, this gives rise to a classical solution to the problem \eqref{pr_hta}. Thus, the problem \eqref{pr_tta} admits a classical solution $\tta(t,x',z)=[P(t)]^\kappa\hat{\ta}(t,x',z)$, and the estimates \eqref{bd_ta} follows immediately. The uniqueness of the solution to \eqref{pr_tta} can be obtained by a standard comparison argument.
\end{proof}

\subsection{Convergence to the inviscid Prandtl equations}
In this subsection, we investigate the asymptotic behavior of the classical solution of \eqref{pr_invis} obtained in Theorem \ref{thm-2-1}, as $\ta$ tends to a positive constant.
For this, we consider a simple case of the problem \eqref{pr_invis} with a uniform outflow, i.e., the functions $P(t)$ and $\Ta(t,x')$ are constants,
and the general case can be studied similarly.
Consider the following problem
\begin{equation}\label{pr_shear}\begin{cases}
\pd_t \bu_h+(\bu_h\cdot\nabla_h+u_d\pd_y)\bu_h=0,\\
\pd_t \ta+(\bu_h\cdot\nabla_h+u_d\pd_y)\ta
=\ta \pd_y^2\ta,\\
\nabla_h\cdot\bu_h+\pd_y u_d= \pd_y^2\ta,\\
(u_d,\ta)|_{y=0}=\big(0,\ta^0(t,x')\big),
\quad\lim\limits_{\yinf}\ta(t,x,y)=1,\\
(\bu_h,\ta)|_{t=0}=(\bu_{h0},\ta_0)(x',y),
\end{cases}\end{equation}
and assume that
\begin{equation}\label{ass_con}
	\ta^0(t,x')~=~1+\ep\tilde\ta^0(t,x'),\quad \ta_0(x',y)~=~1+\ep\tilde\ta_0(x',y)
\end{equation}
with $\ep\ll1.$ Then, formally \eqref{pr_shear} tends to the following inviscid Prandtl system as $\ep\to 0$,
\begin{equation}\label{pr_con}\begin{cases}
\pd_t \bu_h+(\bu_h\cdot\nabla_h+u_d\pd_y)\bu_h=0,\\
\nabla_h\cdot\bu_h+\pd_y u_d= 0,\\
u_d|_{y=0}=0,\quad
\bu_h|_{t=0}=\bu_{h0}(x',y).
\end{cases}\end{equation}
For the above problem \eqref{pr_con}, through analogous arguments as given  in Theorem \ref{thm-2-1}, it's not difficult to obtain the following local existence of a
classical
solution.
\begin{prop}\label{prop_inP}
	Let $\bu_{h0}(x',y)$ be smooth and satisfy the compatibility conditions of the problem \eqref{pr_con} up to order one, and
$t^*>0$ be given as in \eqref{ass_ini}.
	Then,
the problem
\eqref{pr_con} has 
	a unique classical solution in $[0,t^*)$ given by
	\begin{equation}\label{sol_invis1}\begin{split}
	\bu_h(t,x',y)=&\bu_{h0}\Big(\xi_1\big(t,x',\eta_1(t,x',y)\big),
\eta_1(t,x',y)\Big),\\ u_d(t,x',y)=
&\int_0^{\eta_1(t,x',y)}\pd_t(\frac{1}{a_1})(t,x',z)dz+
\int_0^{\eta_1(t,x',y)}\Big[\bb_1\big(t,x',\eta_1(t,x',y)\big)
\cdot\nabla_h(\frac{1}{a_1})(t,x',z)\Big]dz,
	\end{split}\end{equation}
where, the vector function $\xi_1(t,x',z)\in\R^{d-1}$ is determined by the equation
	\begin{equation}\label{tran_x1}\begin{split}
	&x'~=~\xi_1+t\bu_{h0}(\xi_1,z);\\
	\end{split}\end{equation}
	the functions $a_1(t,x',z)$ and $\bb_1(t,x',z)$ are given as
	\begin{equation}\label{notation1}\begin{cases}
	a_1(t,x',z)~:=~ det(I_{d-1}+t\nabla_h\bu_{h0})\big(\xi_1(t,x',z),z\big),\\ 
	\bb_1(t,x',z)~:=~\bu_{h0}\big(\xi_1(t,x',z),z\big);
	\end{cases}\end{equation}
	and $\eta_1(t,x',y)$ is determined by the relation
	\begin{equation}\label{tran_y1}
	y~=~\int_0^{\eta_1}\frac{1}{a_1(t,x',z)}dz.
	\end{equation}
	
\end{prop}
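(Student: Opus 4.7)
The plan is to mimic the method of characteristics used in the proof of Theorem \ref{thm-2-1}, with the major simplification that no temperature equation needs to be solved. Since $\theta$ is absent from \eqref{pr_con} and the incompressibility condition $\nabla_h\cdot\bu_h+\pd_y u_d=0$ replaces the third equation of \eqref{pr_invis}, the degenerate parabolic sub-problem for $\tilde\theta$ (handled by Proposition \ref{lemma_ta}) will disappear entirely, and the construction becomes fully explicit.

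First I would introduce the characteristic coordinates $(\tau,\xi,\eta)$ exactly as in \eqref{tran}--\eqref{pr_char}, i.e. transport along the velocity $(\bu_h, u_d)$. Writing $(\bar\bu_h,\bar u_d)(\tau,\xi,\eta)=(\bu_h,u_d)(\tau,x'(\tau,\xi,\eta),y(\tau,\xi,\eta))$, the first equation of \eqref{pr_con} gives $\pd_\tau\bar\bu_h=0$, hence
\[
\bar\bu_h(\tau,\xi,\eta)\equiv\bu_{h0}(\xi,\eta),\qquad x'=\xi+\tau\bu_{h0}(\xi,\eta).
\]
As in the proof of Theorem \ref{thm-2-1}, the assumption \eqref{ass_ini} guarantees that the map $\xi\mapsto x'$ is invertible on $[0,t^*)$, giving $\xi_1=\xi_1(\tau,x',\eta)$ satisfying \eqref{tran_x1}.

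Next I would compute the Jacobian $J(\tau,\xi,\eta)=\pd(x',y)/\pd(\xi,\eta)$. The same computation leading to \eqref{j} now gives
\[
\pd_\tau J = J\cdot (\nabla_h\cdot\bu_h+\pd_y u_d) = 0,
\]
by the divergence-free condition, so $J\equiv J|_{\tau=0}=1$. Expanding $J$ as in \eqref{j1} and dividing by $\det(\nabla_\xi x')=\det(I_{d-1}+\tau\nabla_\xi\bu_{h0})=a_1(\tau,x',\eta)>0$, one obtains, along the characteristic $x'=\xi+\tau\bu_{h0}(\xi,\eta)$,
\[
\frac{\pd}{\pd\eta}y(\tau,\xi_1(\tau,x',\eta),\eta)=\frac{1}{a_1(\tau,x',\eta)}.
\]
Using $u_d|_{y=0}=0$ together with \eqref{pr_char}, we have $y=0$ when $\eta=0$, so integrating in $\eta$ yields \eqref{tran_y1}. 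Since $a_1>0$ on $[0,t^*)$, this relation is invertible and defines $\eta_1(\tau,x',y)$.

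Finally, the formula for $\bu_h$ in \eqref{sol_invis1} follows by composition, and the formula for $u_d$ comes from differentiating \eqref{tran_y1} in $\tau$: writing $\tilde y(\tau,x',\eta)=\int_0^\eta 1/a_1(\tau,x',z)\,dz$, the relation $y(\tau,\xi,\eta)=\tilde y(\tau,\xi+\tau\bu_{h0}(\xi,\eta),\eta)$ gives
\[
u_d=y_\tau=\pd_\tau\tilde y+\bu_{h0}\cdot\nabla_h\tilde y,
\]
which, after expressing $\bu_{h0}$ via $\bb_1$ and rewriting in $(t,x',y)$ coordinates, is precisely the second line of \eqref{sol_invis1}. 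Uniqueness follows because the characteristic construction is reversible: any classical solution must satisfy \eqref{tran_x1} and \eqref{tran_y1}, which determine it uniquely.

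There is no real obstacle here since all building blocks already appear in the proof of Theorem \ref{thm-2-1}; the main point is to notice that $J\equiv 1$ trivially replaces the role played by \eqref{Jaco} and \eqref{pr_tta}, so no auxiliary PDE (and in particular no analogue of Proposition \ref{lemma_ta}) is required. The most delicate bookkeeping step is verifying that the explicit expressions in \eqref{sol_invis1} actually satisfy the boundary condition $u_d|_{y=0}=0$ and the divergence-free condition in the original coordinates, which reduces to a direct but tedious chain-rule check analogous to the one carried out in step (3) of the proof of Theorem \ref{thm-2-1}.
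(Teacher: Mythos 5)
Your proposal is correct and follows exactly the route the paper intends: repeat the characteristic-coordinate construction of Theorem~\ref{thm-2-1}, observe that the divergence-free condition forces $J\equiv 1$ in place of \eqref{Jaco}, and thereby remove the auxiliary degenerate parabolic problem for $\tta$ (and hence Proposition~\ref{lemma_ta}), which is also why the existence interval is the full $[0,t^*)$ rather than a shorter $[0,t_0)$. The paper itself gives no separate proof, noting only that the result follows by ``analogous arguments'' to Theorem~\ref{thm-2-1}, which is precisely what you have carried out.
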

Now, we show that the solution of \eqref{pr_shear} given in Theorem  \ref{thm-2-1} converges to $(\bu_h,u_d,1)$
when $\epsilon\to 0$,
where $(\bu_h,u_d)$ is the solution of \eqref{pr_con} given in Proposition \ref{prop_inP}.

\begin{theorem}\label{prop_con} Assume that the initial data of the problem \eqref{pr_shear} are smooth, and satisfy \eqref{ass_ini} and the compatibility conditions of \eqref{pr_shear} up to order one.
Moreover, they have the special form \eqref{ass_con}, with $\tilde{\ta}_0(x',y)$ satisfying
\begin{equation}\label{ass_tta}
(1+y)^k\tilde{\ta}_0(x',y)~\in~H^2_{x'}\big(\R^{d-1},H^1_y(\R_+)\big)
\end{equation}
for some constant $k>\frac{1}{2}$.
Let $(\bu_h,u_d,\ta)(t,x',y)$ ($0\le t< t_0\le t^*$) and $(\bu_{h1},u_{d1})(t,x',y)$ ($0\le t< t^*$) be the solutions of the problems \eqref{pr_shear}
 and \eqref{pr_con}  given in Theorem \ref{thm-2-1}  and Proposition \ref{prop_inP} respectively.
Then, for sufficiently small $\ep$ there is a constant $C>0$ independent of $\ep,$ such that for all $(t,x',y)\in[0,t_0)\times\R^d_+$,
\begin{equation}\label{est_con}	\big|(\bu_h,u_d,\ta)(t,x',y)-(\bu_{h1},u_{d1},1)(t,x',y)\big|~\leq~C\ep.
\end{equation}
\end{theorem}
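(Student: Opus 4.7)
My approach is to compare the semi-explicit formulas \eqref{sol_invis} and \eqref{sol_invis1} term by term, reducing the convergence to an $O(\epsilon)$ bound on $\tta-1$ and on $\eta-\eta_1$. The key first observation is that the algebraic equation \eqref{2.1} defining $\xi$ coincides with \eqref{tran_x1}, so $\xi\equiv\xi_1$ and $\bb\equiv\bb_1$; combining \eqref{notation} (with $P\equiv 1$) and \eqref{notation1} yields
\begin{equation*}
a(t,x',z)\;=\;\bigl(1+\epsilon\tilde\ta_0(\xi(t,x',z),z)\bigr)\,a_1(t,x',z),
\end{equation*}
so that $|a-a_1|\le C\epsilon$ and $a,a_1\ge c_0>0$ on $[0,t_0)\times\R^d_+$ by \eqref{ass_ini}. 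Write $\tta=1+\epsilon\,\tilde\vta$; then from \eqref{pr_tta} the rescaled perturbation $\tilde\vta$ satisfies the quasilinear parabolic problem
\begin{equation*}
\pd_t\tilde\vta+\bb\cdot\nabla_h\tilde\vta-\ka a\,\pd_z\!\Bigl(\tfrac{a}{1+\epsilon\tilde\vta}\,\pd_z\tilde\vta\Bigr)=0,\qquad \tilde\vta|_{z=0}=\tilde\ta^0,\quad \lim_{\zinf}\tilde\vta=0,\quad \tilde\vta|_{t=0}=\tilde\ta_0,
\end{equation*}
with $O(1)$ data independent of $\epsilon$, so the maximum-principle argument of Proposition \ref{lemma_ta} applied to $\tilde\vta$ gives $\|\tilde\vta\|_{L^\infty([0,t_0)\times\R^d_+)}\le C$ uniformly in $\epsilon$.

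\textbf{Weighted decay and $\eta$-estimate.} To capture the decay of $\tilde\vta$ in $z$ inherited from \eqref{ass_tta}, I would subtract a smooth bounded extension of $\tilde\ta^0$ (a function of $t,x'$) to kill the non-decaying boundary trace, and then perform a weighted $L^2$ energy identity by testing the resulting equation against $(1+z)^{2k}\tilde\vta$. The weight derivative and quasilinear diffusion generate lower-order contributions that are controllable by Gronwall thanks to the uniform lower bound $\tta\ge C_0^{-1-2\ka}$ from Proposition \ref{lemma_ta}, yielding
\begin{equation*}
\sup_{t\in[0,t_0)}\|(1+z)^k\tilde\vta(t,\cdot,\cdot)\|_{L^\infty_{x'}L^2_z}\le C.
\end{equation*}
Subtracting \eqref{tran_y1} from \eqref{tran_y} at a common $y$ then gives
\begin{equation*}
\int_{\eta_1}^{\eta}\frac{\tta}{a}\,dz\;=\;-\epsilon\int_0^{\eta_1}\frac{\tilde\vta-\tilde\ta_0(\xi,z)}{a}\,dz,
\end{equation*}
and Cauchy--Schwarz in $z$ with the dual weights $(1+z)^{\pm k}$ yields
\begin{equation*}
\Bigl|\int_0^{\eta_1}\!\tfrac{\tilde\vta-\tilde\ta_0}{a}\,dz\Bigr|\le C\,\|(1+z)^{-k}\|_{L^2_z}\bigl(\|(1+z)^k\tilde\vta\|_{L^2_z}+\|(1+z)^k\tilde\ta_0\|_{L^2_z}\bigr)\le C,
\end{equation*}
which is finite precisely because $k>\tfrac12$. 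Together with the positive lower bound $\tta/a\ge c>0$, this produces $|\eta-\eta_1|\le C\epsilon$ uniformly on $[0,t_0)\times\R^d_+$. The target estimate then follows: from \eqref{sol_invis} versus \eqref{sol_invis1} one has $|\bu_h-\bu_{h1}|\le C\|\bu_{h0}\|_{C^1}|\eta-\eta_1|\le C\epsilon$, while $|\ta-1|=|\epsilon\tilde\vta|\le C\epsilon$ is immediate. The bound for $u_d-u_{d1}$ is obtained by splitting $\int_0^\eta=\int_0^{\eta_1}+\int_{\eta_1}^\eta$ in the formula for $u_d$ and using that $\pd_t(\tta/a-1/a_1)$ and $\nabla_h(\tta/a-1/a_1)$ are $O(\epsilon)$, which rests on analogous weighted bounds for $\pd_t\tilde\vta$ and $\nabla_h\tilde\vta$ obtained by differentiating the equation for $\tilde\vta$.

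\textbf{Main obstacle.} The delicate step is the weighted decay estimate for $\tilde\vta$ and its use at the borderline exponent $k>1/2$: without $\tilde\vta$ inheriting polynomial decay in $z$ from \eqref{ass_tta}, the integrals in the $\eta$-estimate do not close uniformly as $y\to+\infty$, since the pointwise-in-$y$ bound one would naively obtain grows linearly in $\eta_1(y)$. Two subtleties accompany this: first, the boundary datum $\tilde\ta^0$ is $z$-independent and must be removed by an auxiliary extension before the weighted identity can be meaningfully integrated in $z$; second, the quasilinear degeneracy of the diffusion operator $-\ka a\,\pd_z(a\tta^{-1}\pd_z\cdot)$ forces one to invoke the uniform lower bound on $\tta$ from Proposition \ref{lemma_ta} to keep the coefficient strictly parabolic throughout the weighted estimate.
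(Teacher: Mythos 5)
Your proposal follows essentially the same route as the paper: identify $\xi\equiv\xi_1$ and hence $a=a_1(1+\epsilon\tilde\ta_0)$, rescale $\tta=1+\epsilon\tilde\vta$, establish a weighted $L^2$ bound on $(1+z)^k\tilde\vta$ uniformly in $\epsilon$ via energy estimates (the paper states this briefly as "standard energy method" giving \eqref{bound_tta} in $L^\infty_t(H^2_{x'}H^1_z)$), compare \eqref{tran_y} with \eqref{tran_y1} to deduce $|\eta-\eta_1|\le C\epsilon$ using Cauchy--Schwarz with the $(1+z)^{\pm k}$ pairing at $k>1/2$, and then propagate through the solution formulas to $\bu_h$ and $u_d$. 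Your observations about removing the boundary trace before the weighted identity and the role of the uniform lower bound on $\tta$ from Proposition \ref{lemma_ta} are correct refinements of the step the paper leaves implicit, but the overall argument is the paper's.
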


\begin{proof}[\bf{Proof.}] (1)
	Setting the solution $\ta$ of the problem \eqref{pr_shear} having the form
	\begin{equation}\label{ta}
		\ta(t,x',y)~=~1+\ep\tilde{\ta}\big(t,x',\eta(t,x',y)\big),
	\end{equation}
then, from \eqref{pr_tta} we know that $\tilde{\ta}(t,x',z)$ satisfies the following problem  in $\{0\leq t<t_0\le t^*,x'\in\R^{d-1},z>0\}$,
\begin{equation}\label{tta}\begin{cases}
\pd_t\tta+\bb\cdot\nabla_h\tta- a\pd_z\big(\frac{a}{1+\ep\tta}\pd_z\tta\big)=0,\\
\tta|_{z=0}=\tilde\ta^0(t,x'),
\quad \lim\limits_{\zinf}\tta=0,\\
\tta|_{t=0}=\tilde\ta_0(x',z).
\end{cases}\end{equation}
Through similar arguments as given in the proof of Proposition \ref{lemma_ta}, we have the local existence of a classical solution to \eqref{tta}. Moreover, under the assumption \eqref{ass_tta}, by the standard energy method it's not difficult to obtain that there is a constant $C_1>0$ independent of $\ep,$ such that
\begin{equation}\label{bound_tta}
	\|(1+z)^k\tta\|_{L^\infty(0, t_0;H^2_{x'}(\R^{d-1}, H^1_z(\R_+)))}~\leq ~C_1,
\end{equation}
which implies the assertion \eqref{est_con} for the $\ta$ component by using the Sobolev embedding inequality.   	
	
(2) Comparing Theorem \ref{thm-2-1} with Proposition \ref{prop_inP}, we know that the auxiliary function $\xi(t,x',z)$ given by \eqref{2.1} coincides with $\xi_1(t,x',z)$ given by \eqref{tran_x1}, which implies that by combining \eqref{notation} with \eqref{notation1},
	\begin{equation}\label{b}
	a(t,x',z)~=~a_1(t,x',z)\big[1+\ep\tilde{\ta}_0\big(\xi(t,x',z),z\big)\big],\quad	\bb(t,x',z)~=~\bb_1(t,x',z).
	\end{equation}
Also, from \eqref{tran_y} and \eqref{tran_y1} we have
	\begin{equation*}
		\int_{0}^{\eta(t,x',z)}\frac{1+\ep\tilde{\ta}(t,x',z)}{a(t,x',z)}dz~=~\int_{0}^{\eta_1(t,x',z)}\frac{1}{a_1(t,x',z)}dz,
	\end{equation*}
which implies that by \eqref{b},
	\begin{equation}\label{est_eta}
		\ep\int_{0}^{\eta(t,x',z)}\frac{\tilde{\ta}(t,x',z)-\tilde{\ta}_0\big(\xi(t,x',z),z\big)}{a(t,x',z)}dz~=~\int_{\eta(t,x',z)}^{\eta_1(t,x',z)}\frac{1}{a_1(t,x',z)}dz.
	\end{equation}
Note that both $a$ and $a_1$ are bounded and have positive lower bounds, that is, there is a constant $C_2$ independent of $\ep$ such that
\[
C_2^{-1}~\leq~a(t,x',z),~a_1(t,x',z)~\leq ~C_2,\quad (t,x',z)\in [0,t_0)\times\R^d_+,\]
then the right-hand side of \eqref{est_eta} gives that
	\begin{equation}\label{est_rh} \Big|\int_{\eta(t,x',z)}^{\eta_1(t,x',z)}\frac{1}{a_1(t,x',z)}dz\Big|
~\geq~\frac{|\eta(t,x',z)-\eta_1(t,x',z)|}{C_2}.
	\end{equation}
On the other hand, we have that for the left-hand side term of \eqref{est_eta},
	\begin{equation*}\begin{split}
	&\Big|\int_{0}^{\eta(t,x',z)}\frac{\tilde{\ta}(t,x',z)-\tilde{\ta}_0\big(\xi(t,x',z),z\big)}{a(t,x',z)}dz\Big|\\
	&\leq C_2\big(\|\tilde{\ta}(t,x',z)\|_{L^1_z}+\|\tilde{\ta}_0\big(\xi(t,x',z),z\big)\|_{L^1_z}\big)\\
	&\leq C_3\Big(\|(1+z)^k \tilde{\ta}(t,x',z)\|_{L^2_z}+\|(1+z)^k\tilde{\ta}_0\big(\xi(t,x',z),z\big)\|_{L^2_z}\Big).
	\end{split}\end{equation*}
Note that for $t\in[0,t_0)$ and $x'\in\R^{d-1}$,
\[|\tilde{\ta}(t,x',z)|\leq\|\tilde{\ta}(t,x',z)\|_{H^2_{x'}},\]
and
\[|\tilde{\ta}_0\big(\xi(t,x',z),z\big)|\leq\|\tilde{\ta}_0\big(\xi(t,x',z),z\big)\|_{H^2_{x'}}\leq C_4\|\tilde{\ta}_0(x',z)\|_{H^2_{x'}},\]
where we use that $\xi(t,x',z)$ has bounded derivatives up to order two. From the above three inequalities we obtain that
\begin{equation}\label{est_lh}
	\Big|\int_{0}^{\eta(t,x',z)}\frac{\tilde{\ta}(t,x',z)-\tilde{\ta}_0\big(\xi(t,x',z),z\big)}{a(t,x',z)}dz\Big|\leq C_5\|(1+z)^k\tilde{\ta}\|_{L^\infty_t(H^2_{x'}L^2_z)}
\end{equation}
for some constant $C_5>0$ independent of $\ep.$
Plugging \eqref{est_rh} and \eqref{est_lh} into \eqref{est_eta}, it follows that
\begin{equation}\label{con_eta} |\eta(t,x',z)-\eta_1(t,x',z)|~\leq~C_2C_5\ep~ \|(1+z)^k\tilde{\ta}\|
_{L^\infty(0, t_0; H^2_{x'}(\R^{d-1}, L^2_z(\R_+)))}
\end{equation}
for all $(t,x',z)\in[0,t_0)\times\R^d_+$.

(4) Now we prove the estimate \eqref{est_con} for the components  $\bu_h$ and $u_d$. Since $\xi(t,x',z)=\xi_1(t,x',z)$, it follows that from the formulas of $\bu_h$ and $\bu_{h1}$ given by \eqref{sol_invis} and \eqref{sol_invis1} respectively,
\begin{equation}\label{con_u}\begin{split}
&|\bu_h(t,x',y)-\bu_{h1}(t,x',y)|\\
&\leq\|\xi_z(t,x',z)\cdot\nabla_h\bu_{h0}\big(\xi(t,x',z),z\big)
+\pd_y\bu_{h0}\big(\xi(t,x',z),z\big)\|_
{L^\infty}\cdot|\eta-\eta_1|(t,x',y),
\end{split}
\end{equation}
which implies that by using \eqref{con_eta},
\begin{equation}
	|\bu_h(t,x',y)-\bu_{h1}(t,x',y)|~\leq~C_6~\ep,
\end{equation}
for some constant $C_6>0$ independent of $\ep$. 
 Similarly, we can show \eqref{est_con} for the component $u_d$.
\end{proof}


\section{Linearized problems of thermal layer equations at a shear flow}

In this section, we study the well-posedness and long-time asymptotic behavior of the linearized problem of \eqref{pr_shear} at a shear flow.
It is easy to know that under proper initial and boundary data, \eqref{pr_shear} has a shear flow solution:
\begin{equation}\label{shear}
	(\bu_h,u_d,\ta)(t,x',y)~=~\Big(\bU_h(y),0,1\Big)
\end{equation}
with $\bU_h(y)=\big(U_1,\cdots,U_{d-1}\big)^T(y)$.
Then,
the linearized problem of \eqref{pr_shear} at the shear flow \eqref{shear} is given as
\begin{equation}\label{pr_linear}
  \begin{cases}
     &\pd_t\bu_h+\bU_h(y)\cdot\nabla_h\bu_h+\bU_h'(y)u_d=0,\\
     &\pd_t\ta+\bU_h(y)\cdot\nabla_h\ta=\pd_y^2\ta,\\
 &\nabla_h\cdot\bu_h+\pd_yu_d=\pd_y^2\ta, \\
     &(u_d,\ta)|_{y=0}=0,\\
     & (\bu_h,\ta)|_{t=0}=(\bu_{h0},\ta_0)(x',y).
  \end{cases}
\end{equation}

We observe that the problem \eqref{pr_linear} shall be solved by the following two steps. Firstly, we determine $\ta(t,x',y)$ by solving the linear initial-boundary value problem
in
$\{t>0, x'\in \R^{d-1}, y>0\}$:
\begin{equation}\label{pr_ta}
  \begin{cases}
    &\pd_t\ta+\bU_h(y)\cdot\nabla_h\ta=\pd_y^2\ta,\\
     &\ta|_{y=0}=0,\quad\ta|_{t=0}=\ta_0(x',y).
  \end{cases}
\end{equation}
Then, $(\bu_h(t,x',y), u_d(t,x',y))$ are obtained by studying the following problem for the linearized inviscid Prandtl type equations:
\begin{equation}\label{pr_u}
  \begin{cases}
    &\pd_t\bu_h+\bU_h(y)\cdot\nabla_h\bu_h+\bU_h'(y)u_d=0,\\
     &\nabla_h\cdot\bu_h+\pd_yu_d=\pd_y^2\ta, \\
     &u_d|_{y=0}=0,\quad \bu_h|_{t=0}=\bu_{h0}(x',y).
  \end{cases}
\end{equation}
Moreover, it's easy to know that the problem \eqref{pr_ta} with smooth and compatible initial data has a global classical solution and the solution is unique.

\subsection{
Explicit representations of solutions}

Based on the above discussion, we have the following result for the problem \eqref{pr_linear}.

\begin{prop}\label{prop_linear}
   Assume that $\bU_h(y),\bu_{h0}(x',y)$ and $\ta_0(x',y)$ are smooth,   and satisfy the compatibility conditions of \eqref{pr_linear} up to order one. Then, there exists a classical solution $(\bu_h,u_d,\ta)(t,x',y)$ to the problem \eqref{pr_linear}, where $\ta(t,x',y)$ is solved from the problem \eqref{pr_ta}, and $\bu_h(t,x',y), ~u_d(t,x',y)$ are given explicitly as
   \begin{equation}\label{formu-u}
     \begin{split}
        \bu_h(t,x',y)= & \bu_{h0}\big(x'-t\bU_h(y),y\big)+t\bU_h'(y)\int_{0}^{y}(\nabla_h\cdot \bu_{h0})\big(x'-t\bU_h(z),z\big)dz \\
          & +\bU_h'(y)\int_{0}^{y}\ta_0\big(x'-t\bU_h(z),z\big)dz-\bU_h'(y)\int_{0}^{y}\ta\big(t,x',z\big)dz,\\
       u_d(t,x',y) =  &\ta_y(t,x',y)-\ta_y(t,x',0) -\int_{0}^{y}\Big\{(\nabla_h\cdot\bu_{h0})\big(x'-t\bU_h(z),z\big)dz\\
       &-t\int_{0}^{y}\big[\bU_h(y)-\bU_h(z)\big]\cdot\nabla_h(\nabla_h\cdot\bu_{h0})\big(x'-t\bU_h(z),z\big)\Big\}dz\\
          & -\int_{0}^{y}\Big\{\big[\bU_h(y)-\bU_h(z)\big]\cdot\nabla_h\ta_0\big(x'-t\bU_h(z),z\big)\Big\}dz\\
          &+\int_{0}^{y}\Big\{\big[\bU_h(y)-\bU_h(z)\big]\cdot\nabla_h\ta\big(t,x',z\big)\Big\}dz.
     \end{split}
   \end{equation}
\end{prop}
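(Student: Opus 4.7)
My plan is to exploit the triangular structure of \eqref{pr_linear}: the temperature equation decouples from the velocity equations, so I would first solve \eqref{pr_ta} for $\ta$, and then use $\pd_y^2\ta$ as a given source to solve \eqref{pr_u} for $(\bu_h, u_d)$. Existence of a unique classical $\ta$ is standard (the problem is a linear parabolic IBVP with smooth transport coefficient and zero Dirichlet data), as already noted before the statement. For \eqref{pr_u}, I would use the method of characteristics along the shear lines $x' = \xi' + t\bU_h(y)$ at fixed height $y$, at which the operator $\pd_t + \bU_h(y)\cdot\nabla_h$ reduces to $d/dt$.

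Integrating the $\bu_h$-equation along these characteristics yields the implicit identity
\[
\bu_h(t, x', y) = \bu_{h0}(x' - t\bU_h(y), y) - \bU_h'(y)\int_0^t u_d(s, x' - (t-s)\bU_h(y), y)\,ds,
\]
and integrating the divergence equation in $y$ with the boundary condition $u_d|_{y=0}=0$ yields
\[
u_d(t, x', y) = \pd_y\ta(t, x', y) - \pd_y\ta(t, x', 0) - \int_0^y \nabla_h\cdot\bu_h(t, x', z)\,dz.
\]
This forms a coupled system. The key observation that closes it is that the correction $-\bU_h'(y)\int_0^y \ta(t, x', z)\,dz$ in the ansatz \eqref{formu-u} for $\bu_h$, when hit by $\pd_t + \bU_h(y)\cdot\nabla_h$, calls on the $\ta$-equation $\pd_t\ta + \bU_h(z)\cdot\nabla_h\ta = \pd_z^2\ta$ inside the $z$-integral; after integration by parts in $z$, the $\pd_z^2\ta$ term produces precisely the boundary pieces $\pd_y\ta(t,x',y) - \pd_y\ta(t,x',0)$ that $u_d$ must carry. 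This is the mechanism that converts the implicit Volterra-type coupling into the explicit formula.

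Once the ansatz is on the page, the actual proof reduces to direct verification that \eqref{formu-u} satisfies \eqref{pr_u} together with the initial and boundary conditions. Two elementary identities do all the bookkeeping: first, $(\pd_t + \bU_h(y)\cdot\nabla_h)[f(x' - t\bU_h(z), z)] = [\bU_h(y) - \bU_h(z)]\cdot\nabla_h f(x' - t\bU_h(z), z)$ for any smooth $f$; second, Fubini for exchanging the iterated $y$-integrals that appear once one forms $\int_0^y \nabla_h\cdot\bu_h\,dz$. Applying these term by term to the proposed $\bu_h$, and using the $\ta$-equation for the $\int_0^y\ta\,dz$ correction, produces $\pd_t\bu_h + \bU_h(y)\cdot\nabla_h\bu_h = -\bU_h'(y)\,u_d$ with $u_d$ as stated. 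The divergence identity $\nabla_h\cdot\bu_h + \pd_y u_d = \pd_y^2\ta$ follows by differentiating the $u_d$ formula in $y$ (the endpoint contribution $F(y,y) = [\bU_h(y)-\bU_h(y)]\cdot(\cdot) = 0$ in each $[\bU_h(y)-\bU_h(z)]$ term is essential) and matching against $\nabla_h\cdot\bu_h$ computed from the $\bu_h$ formula; every term cancels except $\pd_y^2\ta$. The boundary condition $u_d|_{y=0}=0$ is immediate because all integrals vanish and the two $\pd_y\ta(t,x',0)$ cancel, and $\bu_h|_{t=0}=\bu_{h0}$ holds because at $t=0$ the third and fourth terms of $\bu_h$ coincide (since $\ta(0,x',z) = \ta_0(x',z)$).

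The main obstacle is conceptual rather than computational: identifying the correction $-\bU_h'(y)\int_0^y\ta\,dz$ that unravels the implicit system using nothing more than the $\ta$-equation. Once this ansatz is found, all remaining work is routine chain-rule calculation combined with Fubini, and uniqueness of the classical solution follows from the explicit representation together with uniqueness for \eqref{pr_ta}.
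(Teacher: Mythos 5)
Your route and the paper's are genuinely different, and the paper itself flags yours as the alternative in Remark 3.2(1) ("One can also obtain the expression \eqref{formu-u} by solving the problem \eqref{pr_u} through the method of characteristics as introduced in \cite{H-H}"). The paper proves Proposition 3.1 by taking the Fourier--Laplace transform \eqref{Four_Lap}, turning \eqref{pr_FL} into an ODE in $y$ for $\widetilde{u_d}$ that it solves with an integrating factor $\frac{1}{s+i\xi\cdot\bU_h(y)}$ (see \eqref{FL_v}--\eqref{FL_u}), and then inverting; this \emph{derives} the formula, so there is no ansatz to guess. You instead integrate the $\bu_h$-equation along the shear characteristics $x'=\xi'+t\bU_h(y)$, integrate the divergence equation in $y$ to produce the Volterra coupling, and then \emph{propose} the correction $-\bU_h'(y)\int_0^y\ta\,dz$ and verify it by chain rule. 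The verification you sketch is correct: applying $\pd_t+\bU_h(y)\cdot\nabla_h$ to that correction, splitting $\bU_h(y)=\bU_h(z)+[\bU_h(y)-\bU_h(z)]$ under the integral, and using $\pd_t\ta+\bU_h(z)\cdot\nabla_h\ta=\pd_z^2\ta$ produces the $[\bU_h(y)-\bU_h(z)]\cdot\nabla_h\ta$-term of $u_d$ plus $\int_0^y\pd_z^2\ta\,dz=\ta_y(y)-\ta_y(0)$, which closes the system. (One small slip: that last step is a direct integration of $\pd_z^2\ta$, not an integration by parts.) What the transform method buys is that the representation falls out mechanically from solving a scalar first-order ODE in $y$; what your method buys is that it stays in physical space, makes the dynamical role of each term transparent, and matches the Hong--Hunter template \cite{H-H}, at the cost of having to divine the ansatz (which is the hard part you correctly flag). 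Both are valid proofs of the same statement.
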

\begin{proof}[\bf{Proof.}]
According to the arguments given before this proposition, we only need to derive the representations \eqref{formu-u} of $(\bu_h,u_d)(t,x',y)$. Denote by $\tilde f(s,\xi,y)$  the Fourier-Laplace transform of a function $f(t,x',y)$ for $t>0$ and $x'\in \R^{d-1}$,
\begin{equation}\label{Four_Lap}
	\tilde f(s,\xi,y)~:=~\int_{0}^{+\infty}\int_{\R^{d-1}}f(t,x',y)e^{-st-i\xi\cdot x'}dx'dt
\end{equation}
with ${\rm Re}~s>0$ and $\xi\in\R^{d-1}$.

Applying the Fourier-Laplace transform to the problem \eqref{pr_linear} yields that
\begin{equation}\label{pr_FL}
\begin{cases}
\big[s+i\xi\cdot\bU_h(y)\big]~\widetilde{\bu_h}-\widehat{\bu_{h0}}+\widetilde{u_d}~\bU_h'(y)=0,\\
\big[s+i\xi\cdot\bU_h(y)\big]~\tilde{\ta}-\widehat{\ta_{0}}-\pd_y^2\tilde{\ta}=0,\\
i\xi\cdot\widetilde{\bu_h}+\pd_y\widetilde{u_d}-\pd_y^2\tilde{\ta}=0,\\
\widetilde{u_d}(s,\xi,0)=\tilde{\ta}(s,\xi,0)=0,
\end{cases}
\end{equation}
where $\widehat{\bu_{h0}}=\widehat{\bu_{h0}}(\xi,y)$ and $\widehat{\ta_0}=\widehat{\ta_0}(\xi,y)$ are the Fourier transform of the initial data $\bu_{h0}(x',y)$ and $\ta_0(x',y)$ with respect to $x',$ respectively.

From the first and third equations of \eqref{pr_FL} we have
\[
\big[s+i\xi\cdot\bU_h(y)\big]~\pd_y\widetilde{u_d}-\big[i\xi\cdot\bU'_h(y)\big]~\widetilde{u_{d}}=
\big[s+i\xi\cdot\bU_h(y)\big]\pd_y^2\tilde{\ta}-i\xi\cdot\widehat{\bu_{h0}}.
\]
Solving this equation with the boundary condition $\widetilde{u_d}(s,\xi,0)=0,$ it follows that
\begin{equation}\label{FL_v}\begin{split}
\widetilde{u_d}(s,\xi,y)~&=~\int_0^y\frac{s+i\xi\cdot
\bU_h(y)}{s+i\xi\cdot\bU_h(z)}\pd_y^2\tilde{\ta}(s,\xi,z)dz
-\int_{0}^{y}\frac{s+i\xi\cdot\bU_h(y)}{\big[s+i\xi\cdot
\bU_h(z)\big]^2}~\big[i\xi\cdot\widehat{\bu_{h0}}(\xi,z)
\big]dz,
\end{split}\end{equation}
which implies
\begin{equation}\label{FL_v1}\begin{split}
\widetilde{u_d}(s,\xi,y)
~&=~\pd_y\tilde{\ta}(s,\xi,y)-\pd_y\tilde{\ta}(s,\xi,0)+\int_{0}^{y}\frac{i\xi\cdot\big[\bU_h(y)-\bU_h(z)\big]}{s+i\xi\cdot\bU_h(z)}\pd_y^2\tilde{\ta}(s,\xi,z)dz\\
&\quad -\int_{0}^{y}\Big\{\frac{1}{s+i\xi\cdot\bU_h(z)}+\frac{i\xi\cdot\big[\bU_h(y)-\bU_h(z)\big]}{\big[s+i\xi\cdot\bU_h(z)\big]^2}\Big\}~\big[i\xi\cdot\widehat{\bu_{h0}}(\xi,z)\big]dz\\
~&=~\pd_y\tilde{\ta}(s,\xi,y)-\pd_y\tilde{\ta}(s,\xi,0)+\int_{0}^{y}\big[i\xi\cdot\big(\bU_h(y)-\bU_h(z)\big)\big]\cdot\big[\tilde{\ta}(s,\xi,z)-\frac{\widehat{\ta_0}(\xi,z)}{s+i\xi\cdot\bU_h(z)}\big]dz\\
&\quad -\int_{0}^{y}\Big\{\frac{1}{s+i\xi\cdot\bU_h(z)}+\frac{i\xi\cdot\big(\bU_h(y)-\bU_h(z)\big)}{\big[s+i\xi\cdot\bU_h(z)\big]^2}\Big\}~\big[i\xi\cdot\widehat{\bu_{h0}}(\xi,z)\big]dz,
\end{split}
\end{equation}
by using the second equation given in \eqref{pr_FL}. Then, inverting the Fourier-Laplace transform in \eqref{FL_v1} we obtain the expression of $u_d(t,x',y)$ given in \eqref{formu-u}.

Plugging the relation \eqref{FL_v} into the first equation of \eqref{pr_FL} and using the second equation of \eqref{pr_FL}, we get
\begin{equation}\label{FL_u}
	\widetilde{\bu_h}(s,\xi,y)~=~\frac{\widehat{\bu_{h0}}(\xi,y)}{s+i\xi\cdot\bU_h(y)}+\Big\{\int_{0}^{y}\frac{i\xi\cdot\widehat{\bu_{h0}}(\xi,z)}{\big[s+i\xi\cdot\bU_h(z)\big]^2}dz+\int_{0}^{y}\frac{\widehat{\ta_0}(\xi,z)}{s+i\xi\cdot\bU_h(z)}dz-\int_{0}^{y}\tilde{\ta}(s,\xi,z)dz\Big\}~\bU_h'(y).
\end{equation}
Then, by inverting the Fourier-Laplace transform in this equality we deduce the expression of $\bu_h(t,x',y)$ given in \eqref{formu-u} immediately.

\end{proof}

\begin{remark} (1)
	One can also obtain the expression \eqref{formu-u} by solving the problem \eqref{pr_u} through the method of characteristics as introduced in \cite{H-H}.

	(2)
 From the expression of $\bu_h(t,x',y)$ given in \eqref{formu-u}, we know that
when the initial data $\bu_{h0}(x',y)$ decays faster than the background shear flow $\bU_h(y)$ as $y\to +\infty$, the decay rate of the solution
$\bu_h(t,x',y)$ of the  linearized problem \eqref{pr_linear} is mainly dominated by that of  $\bU_h'(y)$ when $y\rightarrow+\infty$.

(2)
The representation \eqref{formu-u} given in Proposition \ref{prop_linear} shows that in general, there is a loss of derivatives with respect to the tangential variables $x'$ for the solution $(\bu_h, u_d)(t,x',y)$ of the problem \eqref{pr_linear}.
\end{remark}

From the expression \eqref{formu-u}, we divide the solution $(\bu_h,u_d)(t,x',y)$ into two parts:
	\begin{equation}\label{decom}
		(\bu_h, u_d)(t,x',y)~:=~(\tilde\bu_h, \tilde u_d)(t,x',y)+(\bar\bu_h, \bar u_d)(t,x',y),
	\end{equation}
	where
\begin{equation}\label{formu-u1}\begin{cases}
\tilde\bu_h(t,x',y)= & \bu_{h0}\big(x'-t\bU_h(y),y\big)+t\bU_h'(y)\int_{0}^{y}(\nabla_h\cdot \bu_{h0})\big(x'-t\bU_h(z),z\big)dz ,\\
\tilde u_{d}(t,x',y) =  & -\int_{0}^{y}\Big\{(\nabla_h\cdot\bu_{h0})\big(x'-t\bU_h(z),z\big)dz\\
&-t\int_{0}^{y}\big[\bU_h(y)-\bU_h(z)\big]\cdot\nabla_h(\nabla_h\cdot\bu_{h0})\big(x'-t\bU_h(z),z\big)\Big\}dz,\\
\end{cases}\end{equation}
and
\begin{equation}\label{formu-u2}
\begin{cases}
\bar\bu_h(t,x',y)=
& \bU_h'(y)\int_{0}^{y}\ta_0\big(x'-t\bU_h(z),z\big)dz-\bU_h'(y)\int_{0}^{y}\ta\big(t,x',z\big)dz,\\
\bar u_d(t,x',y) =  &\ta_y(t,x',y)-\ta_y(t,x',0) 
 -\int_{0}^{y}\Big\{\big[\bU_h(y)-\bU_h(z)\big]\cdot\nabla_h\ta_0\big(x'-t\bU_h(z),z\big)\Big\}dz\\
&+\int_{0}^{y}\Big\{\big[\bU_h(y)-\bU_h(z)\big]\cdot\nabla_h\ta\big(t,x',z\big)\Big\}dz.
\end{cases}\end{equation}
Then, it is easy to know that  $(\tilde\bu_h,\tilde u_d)(t,x',y)$ and $(\bar\bu_h, \bar u_d)(t,x',y)$ satisfy the following intial-boundary value problems, respectively,
\begin{equation}\label{invis_prandtl}
	\begin{cases}
	&\pd_t\tilde\bu_h+\bU_h(y)\cdot\nabla_h\tilde\bu_h+\bU_h'(y)\tilde u_d=0,\\
	&\nabla_h\cdot\tilde\bu_h+\pd_y\tilde u_d=0, \\
	&\tilde u_d|_{y=0}=0,\quad \tilde\bu_h|_{t=0}=\bu_{h0}(x',y),
	\end{cases}
\end{equation}
and 
\begin{equation}\label{pr_bar}
\begin{cases}
&\pd_t\bar\bu_h+\bU_h(y)\cdot\nabla_h\bar\bu_h+\bU_h'(y)\bar u_d=0,\\
&\nabla_h\cdot\bar\bu_h+\pd_y\bar u_d=\pd_y^2\ta, \\
&\bar u_d|_{y=0}=0,\quad \bar\bu_h|_{t=0}=0.
\end{cases}
\end{equation}
Moreover,  we note that \eqref{invis_prandtl} is the linearization of the inviscid Prandtl equations at the shear flow $\big(\bU_h(y),0\big)$.

Denote by
\begin{equation}\label{def_h}
  \|\bu_h\|(t,y)~:=~\Big(\int_{\R^{d-1}}|\bu_h(t,x',y)|^2dx'\Big)^{\frac{1}{2}},
\end{equation}
and the following anisotropic space:
\[
L^{p,q}~:=~\{f=f(x',y)~\mbox{measurable}:~\|f\|_{L^{p,q}}
:=\|\|f\|_{L^p(dx')}\|_{L^q(dy)}<\infty\}
\]
for $1\leq p,q\leq\infty$, and
$$
H^{m,k}~:=~\{f=f(x',y)~\mbox{measurable}:~\|f\|_{H^{m,k}}:=\Big(\sum\limits_{|\alpha|\leq m,0\leq i\leq k}\|\pd_{x'}^\alpha\pd_y^if\|^2_{L^2(dx'dy)}\Big)^{\frac{1}{2}}<\infty\}
$$
with
\[\pd_{x'}^\alpha=\pd_{x_1}^{\alpha_1}\cdots\pd_{x_{d-1}}^{\alpha_{d-1}},\qquad \alpha=(\alpha_1,\cdots,\alpha_{d-1}),\quad|\alpha|=\alpha_1+\cdots+\alpha_{d-1}.\]

Next, we have the following result on the boundedness estimates of the solution to the problem \eqref{pr_linear}.

\begin{prop}\label{prop_est}
 Assume that $\bU_h\in W^{2,\infty}(\R_+)$, the initial data of the problem \eqref{pr_linear} are bounded in the sense that all norms of the initial data appeared in the following estimates are finite, and also satisfy the compatibility conditions of the problem \eqref{pr_linear}. Let $(\bu_h,u_d,\ta)$ be the solution of the problem \eqref{pr_linear},
 then there exist positive constants $M_0=M_0(\|\bU_h(y)\|_{L^\infty(\R_+)})$ and $M_1=M_1(\|\bU_h(y)\|_{W^{2,\infty}(\R_+)})$ independent of $t,$ such that
\begin{equation}\label{est_ta0}
\|\ta(t,\cdot)\|_{L^2(\R^d_+)}\leq\|\ta_0\|_{L^2(\R^d_+)},\quad\|\nabla_h\ta(t,\cdot)\|_{L^2(\R^d_+)} \leq\|\nabla_h\ta_0\|_{L^2(\R^d_+)},
\end{equation}
and
\begin{equation}\label{est_ta1}\begin{split}
&\|\ta\|(t,y)+\|\ta_y\|(t,y)\leq M_0 \big(\|\ta_0\|_{H^{1,0}}+\|\ta_0\|_{H^{0,2}}\big),\\
&\|\nabla_h\ta\|(t,y)\leq M_0\big(\|\ta_0\|_{H^{2,0}}+\|\ta_0\|_{H^{1,2}}\big),\\
&\|\pd_y^2\ta\|(t,y)\leq M_1\big(\|\ta_0\|_{H^{2,0}}+\|\ta_0\|_{H^{1,2}}+\|\ta_0\|_{H^{0,4}}\big)
 \end{split}\end{equation}
 hold for all $t\ge 0$ and $y\ge 0$.
Moreover, one has the following estimates:
 \begin{equation}\label{est_linear}\begin{split}
\|\bu_h\|(t,y)\leq&\|\bu_{h0}\|(y)
+t|\bU_h'(y)|\cdot\int_{0}^{y}\|\nabla_h\cdot\bu_{h0}\|(z)dz+2\|\ta_0\|_{L^{2}(\R^d_+)}\cdot\big|\sqrt{y}\bU_h'(y)\big|,\\
\|u_d\|(t,y)\leq
&\int_{0}^{y}\|\nabla_h\cdot\bu_{h0}\|(z)dz+t\int_{0}^{y}
\Big[\big|\bU_h(y)-\bU_h(z)\big|\cdot\|\nabla_h\big(\nabla_h\cdot\bu_{h0}
\big)\|(z)\Big]dz\\
&+2\|\nabla_h\ta_0\|_{L^2(\R^d_+)}\Big(\int_0^y\big|\bU_h(y)-\bU_h(z)\big|^2dz\Big)^{\frac{1}{2}}+M_0\Big(\|\ta_0\|_{H^{1,0}}+\|\ta_0\|_{H^{0,2}}\Big).
\end{split}\end{equation}
\end{prop}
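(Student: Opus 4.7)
The plan is to handle the three groups of bounds in \eqref{est_ta0}, \eqref{est_ta1}, \eqref{est_linear} in sequence, using respectively a flat $L^2$ energy identity for \eqref{pr_ta}, a reduction of $\|\cdot\|(t,y)$ to a scalar one-dimensional heat inequality on the half-line together with the maximum principle for \eqref{est_ta1}, and direct term-by-term $L^2_{x'}$ estimates on the explicit formula \eqref{formu-u} for \eqref{est_linear}. The estimates \eqref{est_ta0} are immediate: multiplying the equation in \eqref{pr_ta} by $\ta$ and integrating over $\R^d_+$, the transport term vanishes because $\bU_h$ is a function of $y$ only and $\ta|_{y=0}=0$, so $\|\ta(t,\cdot)\|_{L^2(\R^d_+)}^2$ is nonincreasing; since $\nabla_h$ commutes with both $\bU_h(y)\cdot\nabla_h$ and $\pd_y^2$, the same identity applied to $\nabla_h\ta$ gives the second bound.

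For \eqref{est_ta1}, the key observation is that whenever a scalar $\vf$ satisfies $\vf_t+\bU_h\cdot\nabla_h\vf=\vf_{yy}$ with $\vf|_{y=0}=0$, testing against $\vf$ in the $x'$ variables alone yields
\begin{equation*}
\pd_t\|\vf\|^2(t,y) - \pd_y^2\|\vf\|^2(t,y) = -2\|\vf_y\|^2(t,y) \leq 0,\qquad \|\vf\|^2(t,0)=0,
\end{equation*}
so $\|\vf\|^2(t,y)$ is a subsolution of the Dirichlet heat equation on $\{y>0\}$, and the maximum principle forces $\|\vf\|(t,y)\leq \|\vf(0,\cdot)\|_{L^\infty_y L^2_{x'}}$; a one-dimensional Sobolev embedding in $y$ then converts the right-hand side into the initial-Sobolev combinations that appear in \eqref{est_ta1}. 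Applying this observation with $\vf=\ta$ and $\vf=\nabla_h\ta$ (both of which satisfy the same equation with zero Dirichlet boundary, since $\bU_h$ is $y$-independent) gives the first two lines of \eqref{est_ta1}. For $\|\ta_y\|(t,y)$, the function $w:=\ta_y$ satisfies the same equation with the extra forcing $-\bU_h'(y)\cdot\nabla_h\ta$, so the corresponding energy identity becomes $\pd_t\|w\|^2 - \pd_y^2\|w\|^2 + 2\|w_y\|^2 = -2\int w\,\bU_h'(y)\cdot\nabla_h\ta\,dx'$, whose right-hand side is dominated by $|\bU_h'(y)|\|\nabla_h\ta\|(t,y)\|w\|(t,y)$; substituting the just-established bound for $\|\nabla_h\ta\|(t,y)$ and invoking Duhamel for the inhomogeneous Dirichlet heat equation on the half-line closes the estimate. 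Differentiating once more in $y$ produces an additional source proportional to $\bU_h''\cdot\nabla_h\ta$ and, after the same bootstrap, yields $\|\pd_y^2\ta\|(t,y)$; this is precisely why the constant for the highest-order bound depends on $\|\bU_h\|_{W^{2,\infty}}$.

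For \eqref{est_linear} I would take the $L^2_{x'}$ norm directly in \eqref{formu-u}: translation invariance in $x'$ turns $\|\bu_{h0}(x'-t\bU_h(y),y)\|_{L^2_{x'}}$ into $\|\bu_{h0}\|(y)$; Minkowski's inequality handles terms of the form $\int_0^y(\nabla_h\cdot\bu_{h0})(x'-t\bU_h(z),z)\,dz$; and for terms of shape $\int_0^y[\bU_h(y)-\bU_h(z)]\cdot\nabla_h f\,dz$, a Cauchy--Schwarz split in $z$ followed by integration in $x'$ produces the factor $\bigl(\int_0^y|\bU_h(y)-\bU_h(z)|^2\,dz\bigr)^{1/2}\|\nabla_h f\|_{L^2(\R^d_+)}$, which I apply with $f=\ta_0$ and $f=\ta(t,\cdot)$, bounding the latter via \eqref{est_ta0}. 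The trace pieces $\ta_y(t,x',y)$ and $\ta_y(t,x',0)$ in the formula for $u_d$ are absorbed using \eqref{est_ta1}. The main obstacle I expect is the bootstrap step for $\|\ta_y\|(t,y)$ and $\|\pd_y^2\ta\|(t,y)$: combining the maximum principle for the inhomogeneous half-line heat equation with propagation of the lower-order bound while keeping constants uniform in $t$ requires careful bookkeeping, and it crucially uses that $\bU_h$ depends only on $y$ so that $\nabla_h$ commutes with the transport and tangential regularity propagates without loss of derivatives.
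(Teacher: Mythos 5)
Your proof of \eqref{est_ta0} and \eqref{est_linear} tracks the paper's argument closely (flat $L^2$ energy identity for \eqref{pr_ta}, then direct term-by-term $L^2_{x'}$ estimates on \eqref{formu-u} using translation invariance, Minkowski and Cauchy--Schwarz). For \eqref{est_ta1}, however, you take a genuinely different route: instead of the paper's sequence of $L^2(\R^d_+)$ energy estimates on tangential derivatives of \eqref{pr_ta}, followed by using the equation to control $\pd_y^2\ta$ in $L^2$, interpolation in $y$ for $\ta_y$, and the one-dimensional embedding $\|\cdot\|_{L^{2,\infty}}\le\|\cdot\|_{H^{0,1}}$ at the very end, you observe that $\|\vf\|^2(t,y)$ is a subsolution of the half-line heat equation and invoke the maximum principle pointwise in $y$. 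That observation is correct and gives a cleaner (even slightly stronger) conclusion for $\vf=\ta$ and $\vf=\nabla_h\ta$, both of which really do vanish at $y=0$.

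The gap is in the step for $\|\ta_y\|(t,y)$ and $\|\pd_y^2\ta\|(t,y)$: the function $w:=\ta_y$ does \emph{not} satisfy a zero Dirichlet condition at $y=0$. From $\ta|_{y=0}=0$ one gets $\nabla_h\ta|_{y=0}=0$, and restricting the equation of \eqref{pr_ta} to $y=0$ also gives $\pd_y^2\ta|_{y=0}=0$, but $\ta_y(t,x',0)$ is precisely the uncontrolled Neumann trace. Hence $\|w\|^2(t,y)$ is a subsolution of the inhomogeneous half-line heat equation with an \emph{unknown, time-dependent} boundary value at $y=0$, and ``invoking Duhamel for the inhomogeneous Dirichlet heat equation'' does not close unless that trace is first controlled. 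Switching to $v:=\pd_y^2\ta$, which does vanish on the boundary by compatibility, does not escape the problem either: its equation carries the forcing $2\bU_h'\cdot\nabla_h\ta_y$, and the pointwise-in-$y$ size of $\nabla_h\ta_y$ is exactly what one does not yet have, so the bootstrap circulates without closing. The paper avoids all of this by never running a maximum principle on $\ta_y$: it bounds $\|\pd_y^2\ta\|_{L^2(\R^d_+)}$ algebraically from the equation using $\|\ta_t\|_{L^2}$ and $\|\nabla_h\ta\|_{L^2}$, gets $\|\ta_y\|_{L^2}$ by Gagliardo--Nirenberg interpolation between $\|\ta\|_{L^2}$ and $\|\pd_y^2\ta\|_{L^2}$, and only then converts $L^2_y$ to $L^\infty_y$; none of that requires knowledge of $\ta_y(t,x',0)$.
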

\begin{proof}[\bf{Proof.}] (1)
Firstly, from Proposition \ref{prop_linear} we know that $\ta(t,x',y)$ satisfies the linear problem \eqref{pr_ta}. Then, it is easy to obtain that by energy estimate,
\begin{equation*}
\frac{d}{2dt}\|\ta(t,\cdot)\|^2_{L^2(\R^d_+)}+\|\pd_y\ta(t,\cdot)\|^2_{L^2(\R^d_+)}~=~0,
\end{equation*}
which implies that
\begin{equation}\label{energy_ta}
\|\ta(t,\cdot)\|_{L^2(\R^d_+)}^2+2\int_{0}^{t}\|\pd_y\ta(s,\cdot)\|_{L^2(\R_+^d)}^2ds~=~\|\ta_0\|_{L^2(\R^d_+)}^2,\quad \forall t\geq0.
\end{equation}
Denote by the operator
\[\pd_{\mathcal{T}}^\alpha~:=~\pd_t^{\alpha_1}\pd_{x_1}^{\alpha_2}
\cdots
\pd_{x_{d-1}}^{\alpha_d},\qquad\alpha=(\alpha_1,\cdots,\alpha_d),\quad|\alpha|=\alpha_1+\cdots+\alpha_d.\]
Applying the operator $\pd_{\mathcal{T}}^\alpha,~|\alpha|=1$ to the equation of \eqref{pr_ta}, and similarly  we have that, 
\begin{equation}\label{energy_x}
\|\pt^\alpha\ta(t,\cdot)\|_{L^2(\R^d_+)}^2+2\int_{0}^{t}\|\pd_y\pt^\alpha\ta(s,\cdot)\|^2_{L^2(\R^d_+)}ds~=~\|\pt^\alpha\ta(0,\cdot)\|_{L^2(\R_+^d)}^2.
\end{equation}


(2)
Combining the equation of \eqref{pr_ta} with the estimate  \eqref{energy_x}, and noting that
\begin{equation}\label{ini_pdta}
\ta_t(0,x',y)=\pd_y^2\ta_0(x',y)-\bU_h(y)\cdot\nabla_h\ta_0(x',y),
\qquad\ta_{x_i}(0,x',y)=\ta_{0x_i}(x',y),\quad 1\leq i\leq d-1,
\end{equation}
 it follows that
\begin{equation}\label{energy_yy}\begin{split}
  \|\pd_y^2\ta(t,\cdot)\|_{L^2(\R^d_+)}&~\leq~\|\ta_t(t,\cdot)\|_{L^2(\R^d_+)}+\|\bU_h(y)\|_{L^\infty(\R_+)}\cdot\|\nabla_h\ta(t,\cdot)\|_{L^2(\R^d_+)}\\
  &~\leq~2\|\bU_h(y)\|_{L^\infty(\R_+)}\cdot\|\nabla_h\ta_0\|_{L^2(\R^d_+)}+\|\pd_y^2\ta_0\|_{L^2(\R^d_+)}.
\end{split}\end{equation}
By the classical interpolation inequality we obtain that from \eqref{energy_ta} and \eqref{energy_yy},
\begin{equation}\label{energy_y}\begin{split}
  \|\ta_y(t,\cdot)\|_{L^2(\R^d_+)}&\leq C\Big(\|\ta(t,\cdot)\|_{L^2(\R^d_+)}+\|\pd_y^2\ta(t,\cdot)\|_{L^2(\R^d_+)}\Big)\\
  &\leq C\Big(\|\ta_0\|_{L^2(\R^d_+)}+\|\bU_h(y)\|_{L^\infty(\R_+)}\cdot\|\nabla_h\ta_0\|_{L^2(\R^d_+)}+\|\pd_y^2\ta_0\|_{L^2(\R^d_+)}\Big),
\end{split}\end{equation}
where $C$ is a positive constant independent of $t$. Then, from the estimates \eqref{energy_ta}, \eqref{energy_yy} and \eqref{energy_y} it implies by the imbedding inequality that there is a positive constant $M_0=M_0(\|\bU_h(y)\|_{L^\infty(\R_+)})$ independent of $t$, such that
\begin{equation}\label{est_tay}\begin{split}
&\|\ta(t,\cdot)\|_{L^{2,\infty}}\leq \|\ta(t,\cdot)\|_{H^{0,1}}\leq M_0\big(\|\ta_0\|_{H^{1,0}}+\|\ta_0\|_{H^{0,2}}\big),\\
&\|\ta_y(t,\cdot)\|_{L^{2,\infty}}\leq \|\ta_y(t,\cdot)\|_{H^{0,1}}\leq M_0\big(\|\ta_0\|_{H^{1,0}}+\|\ta_0\|_{H^{0,2}}\big).
\end{split}\end{equation}

(3) Next, we apply $\pt^\alpha,~|\alpha|=1$ to the equation in \eqref{pr_ta} and get
\begin{equation}\label{eq_pdta}
\pd_t\pt^\alpha\ta+\bU_h(y)\cdot\nabla_h\pt^\alpha\ta-\pd_y^2\pt^\alpha\ta=0,
\end{equation}
moreover, we have the initial data \eqref{ini_pdta} and the following boundary value of $\pt^\alpha\ta(t,x',y)$:
\begin{equation}\label{bd_pdta}
\pt^\alpha\ta(t,x',0)~=~0.
\end{equation}
Thus, by using the same argument as above for the solution $\pt^\alpha\ta$ of the problem \eqref{eq_pdta}-\eqref{bd_pdta}, we can obtain that there exist positive constants $C_1=C_1(\|\bU_h(y)\|_{L^\infty(\R_+)})$ and $C_2=C_2(\|\bU_h(y)\|_{W^{2\infty}(\R_+)})$ independent of $t,$ such that
\begin{equation}\label{est_tax}\begin{split}
&\|\nabla_h\ta(t,\cdot)\|_{L^{2,\infty}}\leq C_1\big(\|\ta_0\|_{H^{2,0}}+\|\ta_0\|_{H^{1,2}}\big),\\
&\|\ta_t(t,\cdot)\|_{L^{2,\infty}}\leq C_2\big(\|\ta_0\|_{H^{2,0}}+\|\ta_0\|_{H^{1,2}}+\|\ta_0\|_{H^{0,4}}\big).
 \end{split}\end{equation}
 Furthermore, from the equation given in \eqref{pr_ta} we obtain that there is a positive constant $C_3=C_3(\|\bU_h(y)\|_{W^{2\infty}(\R_+)})$ independent of $t$, such that
 \begin{equation}\label{est_tayy}\begin{split}
 \|\pd_y^2\ta(t,\cdot)\|_{L^{2,\infty}}&\leq\|\ta_t(t,\cdot)\|_{L^{2,\infty}}+\|\bU_h(y)\|_{L^\infty(\R_+)}\cdot\|\nabla_h\ta\|_{L^{2,\infty}}\\
 &\leq C_3\big(\|\ta_0\|_{H^{2,0}}+\|\ta_0\|_{H^{1,2}}+\|\ta_0\|_{H^{0,4}}\big).
\end{split} \end{equation}
Combining \eqref{est_tay}, \eqref{est_tax} and \eqref{est_tayy}, we obtain the estimates given in \eqref{est_ta1}.

(4) From the representation \eqref{formu-u} of $(\bu_h, u_d)$ given in Proposition \ref{prop_linear}, it is easy to obtain:
\begin{equation}\label{energy_uh}\begin{split}
\|\bu_h\|(t,y)&\leq\|\bu_{h0}\|(y)+|\bU_h'(y)|\int_{0}^{y}\Big[\|\ta_0\|(z)+\|\ta\|(t,z)\Big]dz+t|\bU_h'(y)|\int_{0}^{y}\|\nabla_h\cdot\bu_{h0}\|(z)dz\\
&\leq\|\bu_{h0}\|(y)+t|\bU_h'(y)|\int_{0}^{y}\|\nabla_h\cdot\bu_{h0}\|(z)dz+\big|\sqrt{y}\bU_h'(y)\big|\Big(\|\ta_0\|_{L^2(\R^d_+)}+\|\ta(t,\cdot)\|_{L^2(\R^d_+)}\Big),
\end{split}\end{equation}
and
\begin{equation}\label{energy_ud}\begin{split}
\|u_d\|(t,y)\leq&2\|\ta_y(t,\cdot)\|_{L^{2,\infty}}+\int_{0}^{y}\Big[\big|\bU_h(y)-\bU_h(z)\big|\cdot\Big(\|\nabla_h\ta_0\|(z)+\|\nabla_h\ta\|(t,z)\Big)\Big]dz\\
& +\int_{0}^{y}\|\nabla_h\cdot\bu_{h0}\|(z)dz+t\int_{0}^{y}
\Big[\big|\bU_h(y)-\bU_h(z)\big|\cdot\|\nabla_h\big(\nabla_h\cdot\bu_{h0}
\big)\|(z)\Big]dz\\
\leq&2\|\ta_y(t,\cdot)\|_{L^{2,\infty}}+\Big(\int_{0}^{y}\big|\bU_h(y)-\bU_h(z)\big|^2dz\Big)^{\frac{1}{2}}\cdot\Big(\|\nabla_h\ta_0\|_{L^2(\R^d_+)}+\|\nabla_h\ta(t,\cdot)\|_{L^2(\R^d_+)}\Big)\\
& +\int_{0}^{y}\|\nabla_h\cdot\bu_{h0}\|(z)dz+t\int_{0}^{y}
\Big[\big|\bU_h(y)-\bU_h(z)\big|\cdot\|\nabla_h\big(\nabla_h\cdot\bu_{h0}
\big)\|(z)\Big]dz.
\end{split}\end{equation}
Combining \eqref{energy_ta} with \eqref{energy_uh}, we obtain the estimate of  $\bu_h$ given in \eqref{est_linear}. Substituting \eqref{est_ta1} and \eqref{energy_x} into \eqref{energy_ud}, the estimate of  $u_d$ given in \eqref{est_linear} follows immediately.
\end{proof}

\begin{remark}
	From the
computation given in the above proposition, and  the expressions \eqref{formu-u1} and \eqref{formu-u2} of $(\tilde{\bu}_h,\tilde u_d)$ and $(\bar{\bu}_h,\bar u_d)$ respectively, indeed we can get the following more precise estimates
 for all $t\geq0,y\geq0$, 
	 \begin{equation}\label{est_linear1}\begin{cases}
	 \|\tilde\bu_h\|(t,y)\leq\|\bu_{h0}\|(y)
	 +t|\bU_h'(y)|\cdot\int_{0}^{y}\|\nabla_h\cdot\bu_{h0}\|(z)dz,\\
	 \|\tilde u_d\|(t,y)\leq
	 \int_{0}^{y}\|\nabla_h\cdot\bu_{h0}\|(z)dz+t\int_{0}^{y}
	 \Big[\big|\bU_h(y)-\bU_h(z)\big|\cdot\|\nabla_h\big(\nabla_h\cdot\bu_{h0}
	 \big)\|(z)\Big]dz,
	 \end{cases}\end{equation}
	 and
	  \begin{equation}\label{est_linear2}\begin{cases}
	  \|\bar\bu_h\|(t,y)\leq
	  2\|\ta_0\|_{L^{2}(\R^d_+)}\cdot\big|\sqrt{y}\bU_h'(y)\big|,\\
	  \|\bar u_d\|(t,y)\leq
	  2\|\nabla_h\ta_0\|_{L^2(\R^d_+)}\Big(\int_0^y\big|\bU_h(y)-\bU_h(z)\big|^2dz\Big)^{\frac{1}{2}}+M_0\Big(\|\ta_0\|_{H^{1,0}}+\|\ta_0\|_{H^{0,2}}\Big).
	  \end{cases}\end{equation}
\end{remark}

\subsection{Linearized stability of shear flows in two-dimensional problems}

The next main goal is to improve the estimates given in \eqref{est_linear} to have a lower bound on the growth rate of $(\bu_h, u_d)$ as $t\to+\infty$, under certain structural condition on shear flow  $\bU_h(y)$, which implies the asymptotic instability of the linearized problem \eqref{pr_linear}. Hong and Hunter had studied the similar problem for the linearized two-dimensional inviscid Prandtl equations in \cite{H-H}.

Firstly, we consider the problem \eqref{pr_linear} in two space variables. Note that from \eqref{invis_prandtl},  $(\tilde\bu_h,\tilde u_d)(t,x',y)$ is the solution to the linearized problem of two-dimensional inviscid Prandtl equations, thus from the relation \eqref{decom} and the estimate \eqref{est_linear2} we claim that $\bu_h(t,x',y)$ satisfies similar estimates as given in \cite[Proposition 6.1]{H-H}. Indeed, we have the following result.

\begin{prop}\label{prop_sta2d}
	Under the assumptions of Proposition \ref{prop_est}, let
	$(\bu_h,u_d,\ta)(t,x',y)$ be the solution of \eqref{pr_linear}
in $\{t>0, x'\in \R, y>0\}$.
	
   (1) If $\bU(y)$ has no any critical point, then $\|\bu_h\|(t,y)$ and $\|u_d\|(t,y)$ are bounded uniformly in $t$ with the following estimates:
	\begin{equation}\label{est_uv}\begin{split}
	\|\bu_h\|(t,y)\leq&\Big|\frac{\bU_h'(y)}{\bU_h'(0)}\Big|\|\bu_{h0}\|(0) +|\bU_h'(y)|\int_{0}^{y}\Big\{\Big|\frac{\|\pd_y\bu_{h0}\|(z)}{\bU_h'(z)}\Big|+\Big|\frac{\bU_h''(z)}{\big(\bU_h'(z)\big)^2}\Big|\|\bu_{h0}\|(z)\Big\}dz+2\|\ta_0\|_{L^2(\R^d_+)}|\sqrt{y}\bU_h'(y)|,\\
	\|u_d\|(t,y)\leq&\Big|\frac{\bU_h(y)-\bU_h(0)}{\bU_h'(0)}\Big|\|\pd_{x'}\bu_{h0}\|(0) +\int_{0}^{y}\Big|\frac{\bU_h(y)-\bU_h(z)}{\bU_h'(z)}\Big|\Big[\|\pd_{x'y}^2\bu_{h0}\|(z)+\Big|\frac{\bU_h''(z)}{\bU_h'(z)}\Big|\|\pd_{x'}\bu_{h0}\|(z)\Big]dz\\
	&+2\|\pd_{x'}\ta_0\|_{L^2(\R^d_+)}\Big(\int_0^y\big|\bU_h(y)-\bU_h(z)\big|^2dz\Big)^{\frac{1}{2}}+M_0\Big(\|\ta_0\|_{H^{1,0}}+\|\ta_0\|_{H^{0,2}}\Big).
	\end{split}\end{equation}
	(2) If $\bU(y)$ has a single, non-degenerate critical point at $y=y_0>0$, and the initial data $\bu_{h0}(x',y)$ satisfies
	\begin{equation}
        \|\bu_{h0}\|_{\frac{i}{2}}(y_0)~:=\Big(\int_{\R_\xi}|\xi|^i\cdot|\widehat{\bu_{h0}}(\xi,y_0)|^2d\xi\Big)^{\frac{1}{2}}<\infty, \qquad i=1,2,3,
	\end{equation}
	where $\widehat{\bu_{h0}}(\xi,y)$ is the Fourier transform of $\bu_{h0}(x',y)$ with respect to $x'$,
	 then when $y> y_0$, it holds that 
	 for sufficiently large $t$,
	\begin{equation}\label{est-uv}\begin{split}
	&\|\bu_h\|(t,y)\geq C\sqrt{t}~\frac{|\bU_h'(y)|}{\sqrt{|\bU_h''(y_0)|}},\qquad\|u_d\|(t,y)\geq C\sqrt{t}~\frac{|\bU_h(y)-\bU_h(y_0)|}{\sqrt{|\bU_h''(y_0)|}},
	\end{split}\end{equation}
	where the positive constant $C$ depends only on $y_0$ and $\bu_{h0}$.
	Furthermore, we have similar results as above for $\pd_y\bu_h.$
	\end{prop}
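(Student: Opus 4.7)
The strategy exploits the decomposition $(\bu_h,u_d)=(\tilde{\bu}_h,\tilde u_d)+(\bar{\bu}_h,\bar u_d)$ recorded in \eqref{decom}--\eqref{formu-u2}. The Prandtl piece $(\tilde{\bu}_h,\tilde u_d)$ satisfies the linearized inviscid Prandtl system \eqref{invis_prandtl}, which in two space dimensions is exactly the problem treated by Hong and Hunter in \cite[Proposition~6.1]{H-H}, while the thermal piece $(\bar{\bu}_h,\bar u_d)$ is already uniformly bounded in $t$ by \eqref{est_linear2}. Therefore Part~(1) reduces to upgrading the $t$-linear bound \eqref{est_linear1} on $(\tilde{\bu}_h,\tilde u_d)$ into a $t$-independent one, and Part~(2) reduces to extracting a $\sqrt{t}$ lower bound on $(\tilde{\bu}_h,\tilde u_d)$ that dominates the $O(1)$ thermal contribution.

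For Part~(1), assume $\bU_h'(z)\ne 0$ on $[0,y]$. In \eqref{formu-u1} the only source of linear growth in $t$ is the term $t\bU_h'(y)\int_0^y\pd_{x'}\bu_{h0}(x'-t\bU_h(z),z)\,dz$. I would apply the identity
\begin{equation*}
t\,\pd_{x'}\bu_{h0}(x'-t\bU_h(z),z)=-\frac{1}{\bU_h'(z)}\frac{d}{dz}\bu_{h0}(x'-t\bU_h(z),z)+\frac{1}{\bU_h'(z)}\pd_z\bu_{h0}(x'-t\bU_h(z),z),
\end{equation*}
and integrate by parts in $z$. The boundary contribution at $z=0$ produces the $\bU_h'(y)/\bU_h'(0)$ prefactor, while the remaining integrals carry the bounded weights $1/\bU_h'$ and $\bU_h''/(\bU_h')^2$, which kills the linear-in-$t$ growth. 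Taking $\|\cdot\|(t,y)$ and adding the $\bar{\bu}_h$ bound from \eqref{est_linear2} yields the claimed estimate for $\bu_h$. The estimate for $u_d$ is obtained by applying the same identity to the $t$-linear term of $\tilde u_d$ in \eqref{formu-u1} and combining with \eqref{est_linear2}.

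For Part~(2), fix $y>y_0$. Taking the partial Fourier transform in $x'$ of \eqref{formu-u1} gives
\begin{equation*}
\widehat{\tilde{\bu}_h}(t,\xi,y)=\widehat{\bu_{h0}}(\xi,y)\,e^{-it\xi\bU_h(y)}+it\xi\,\bU_h'(y)\int_0^y\widehat{\bu_{h0}}(\xi,z)\,e^{-it\xi\bU_h(z)}\,dz.
\end{equation*}
The phase $z\mapsto -\xi\bU_h(z)$ has a unique non-degenerate critical point at $z=y_0\in(0,y)$. By the classical stationary phase formula, cutting off near $z=y_0$ and integrating by parts on the non-stationary region, the $z$-integral equals
\begin{equation*}
\sqrt{\tfrac{2\pi}{t|\xi|\,|\bU_h''(y_0)|}}\,\widehat{\bu_{h0}}(\xi,y_0)\,e^{-it\xi\bU_h(y_0)-\frac{i\pi}{4}\mathrm{sgn}(-\xi\bU_h''(y_0))}+R(t,\xi),
\end{equation*}
with a remainder $|R(t,\xi)|\lesssim (t|\xi|)^{-1}$ in amplitude involving $\pd_z\widehat{\bu_{h0}}(\xi,\cdot)$ at $y_0$. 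Multiplying by $it\xi\bU_h'(y)$, the leading piece has modulus $\sqrt{2\pi t}\,|\bU_h'(y)|\,|\xi|^{1/2}\,|\widehat{\bu_{h0}}(\xi,y_0)|\,|\bU_h''(y_0)|^{-1/2}$, so Plancherel in $\xi$ gives the $L^2(dx')$ contribution $\sqrt{2\pi t}\,|\bU_h'(y)|\,|\bU_h''(y_0)|^{-1/2}\,\|\bu_{h0}\|_{1/2}(y_0)$, whereas the remainder and the non-oscillatory boundary terms are bounded in $t$ thanks to $\|\bu_{h0}\|_{1}(y_0)$ and $\|\bu_{h0}\|_{3/2}(y_0)$. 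Combining with the $O(1)$ estimate on $\bar{\bu}_h$ from \eqref{est_linear2} and applying the reverse triangle inequality produces the $\sqrt{t}$ lower bound for $\|\bu_h\|(t,y)$ once $t$ is large. The bound on $u_d$ follows analogously, the stationary-phase amplitude now picking up the extra factor $\bU_h(y)-\bU_h(z)\big|_{z=y_0}=\bU_h(y)-\bU_h(y_0)$; the bound on $\pd_y\bu_h$ is obtained by differentiating the Fourier representation in $y$ before applying stationary phase.

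The main obstacle is making the stationary-phase expansion effective uniformly in $\xi\in\R$: for small $|\xi|$ the phase barely oscillates, while for large $|\xi|$ the $(t|\xi|)^{-1}$ remainder degrades the $L^2(d\xi)$ integrability. This is handled by a dyadic frequency decomposition together with non-stationary-phase integration by parts away from $z=y_0$; the three weighted norms $\|\bu_{h0}\|_{i/2}(y_0)$, $i=1,2,3$, assumed in the statement are precisely what is needed to control the amplitude and its first two $z$-derivatives at the critical point. A secondary concern is possible cancellation between the Prandtl leading term and the thermal correction $\bar{\bu}_h$; since $\|\bar{\bu}_h\|(t,y)$ is $O(1)$ in $t$ while the Prandtl contribution grows like $\sqrt{t}$, such cancellation is automatically ruled out for sufficiently large $t$.
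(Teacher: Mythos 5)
Your proposal is correct and follows essentially the same path as the paper: decompose via \eqref{decom}, use the $t$-uniform bound \eqref{est_linear2} on the thermal part $(\bar\bu_h,\bar u_d)$, then in Part~(1) integrate by parts to trade the linearly growing term for bounded ones (you do this directly in physical space, the paper does it after taking the Fourier transform in $x'$, but the mechanism is identical), and in Part~(2) extract the $\sqrt{t}$ lower bound from the stationary-phase formula applied to the Fourier representation \eqref{fourier_tu} and conclude by a reverse triangle inequality against the $O(1)$ thermal part. The only minor divergence is that the paper avoids your uniformity-in-$\xi$ concern by simply restricting the Plancherel lower bound to a fixed bounded frequency band $[a,b]$ away from $0$ before applying stationary phase, rather than performing a dyadic decomposition over all $\xi$.
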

	\begin{proof}[\bf{Proof.}]
		Combining \eqref{decom} with \eqref{est_linear2}, we only need to estimate $\|(\tilde\bu_h,\tilde u_d)\|(t,y)$. As we know, $(\tilde\bu_h,\tilde u_d)(t,x',y)$ solves the linearized inviscid Prandtl equation \eqref{invis_prandtl}, so we can follow the method given in the proof of Proposition 6.1 in \cite{H-H} to have the estimates of $(\tilde\bu_h,\tilde u_d)(t,x',y)$, and  we sketch the process in the following.
		
		  By taking the Fourier transform with respect to $x'\in\R$ in the representation \eqref{formu-u1} of $(\tilde \bu_h,\tilde u_d)(t,x',y)$, it follows that
		\begin{equation}\label{fourier_tu}\begin{split}
		\widehat{\tilde{\bu}_h}(t,\xi,y)=&\widehat{\bu_{h0}}(\xi,y)e^{-it\xi\bU_h(y)}+it\xi\bU_h'(y)\int_{0}^{y}\widehat{{\bu}_{h0}}(\xi,z)e^{-it\xi\bU_h(z)}dz, \\
				\widehat{\tilde u_d}(t,\xi,y)= &-\int_{0}^{y} \Big\{i\xi\widehat{\bu_{h0}}(\xi,y)-t\xi^2\big(\bU_h(y)-\bU_h(z)\big)\widehat{\bu_{h0}}(\xi,z)\Big\}e^{-it\xi\bU_h(z)}dz.
			\end{split}\end{equation}
		
		(1) If $\bU_h(y)$ has no any critical point, we take integration by parts in \eqref{fourier_tu} to obtain that
		\begin{equation*}
		\begin{split}
		\widehat{\tilde{\bu}_h}(t,\xi,y)=&
\frac{\bU'_h(y)}{\bU'_h(0)}
\widehat{\bu_{h0}}(\xi,0)e^{-it\xi\bU_h(0)}+\bU_h'(y)\int_{0}^{y}\big[\frac{\pd_y\widehat{{\bu}_{h0}}(\xi,z)}{\bU_h'(z)}-\frac{\bU_h''(z)}{\big(\bU_h'(z)\big)^2}\widehat{{\bu}_{h0}}(\xi,z)\big]e^{-it\xi\bU_h(z)}dz, \\
		\widehat{\tilde u_d}(t,\xi,y)= &-i\xi\frac{\bU_h(y)-\bU_h(0)}{\bU_h'(0)}\widehat{\bu_{h0}}(\xi,0)e^{-it\xi\bU_h(0)}\\
		&-i\xi\int_{0}^{y} \big[\frac{\pd_y\widehat{{\bu}_{h0}}(\xi,z)}{\bU_h'(z)}
-\frac{\bU_h''(z)}{\big(\bU_h'(z)\big)^2}\widehat{{\bu}_{h0}}(\xi,z)\big]\big(\bU_h(y)-\bU_h(z)\big)
e^{-it\xi\bU_h(z)}dz, 
		\end{split}
		\end{equation*}
which implies by using Parseval's identity,
		\begin{equation}\label{est_uv1}\begin{split}
		\|\tilde{\bu}_h\|(t,y)\leq&
\Big|\frac{\bU'_h(y)}{\bU_h'(0)}\Big|
\|\bu_{h0}\|(0) +|\bU_h'(y)|\int_{0}^{y}\Big[\frac{\|\pd_y\bu_{h0}\|(z)}{|\bU_h'(z)|}+\frac{|\bU_h''(z)|}{\big(\bU_h'(z)\big)^2}\|\bu_{h0}\|(z)\Big]dz,\\
		\|\tilde u_d\|(t,y)\leq&\Big|\frac{\bU_h(y)-\bU_h(0)}{\bU_h'(0)}\Big|\|\pd_{x'}\bu_{h0}\|(0) +\int_{0}^{y}\Big|\frac{\bU_h(y)-\bU_h(z)}{\bU_h'(z)}\Big|\Big[\|\pd_{x'y}^2\bu_{h0}\|(z)+\Big|\frac{\bU_h''(z)}{\bU_h'(z)}\Big|\|\pd_{x'}\bu_{h0}\|(z)\Big]dz.
		\end{split}\end{equation}
		Substituting \eqref{est_linear2} and \eqref{est_uv1} into \eqref{decom}, it follows the estimates given in \eqref{est_uv} immediately.
		
	(2) If $\bU_h(y)$ has a
single,
 non-degenerate critical point at $y=y_0$, through the method of stationary phase we obtain that for $y>y_0$ and as $|t\xi|\rightarrow+\infty,$
	\begin{equation}\label{st-ph}\begin{split}
	&\int_{0}^{y}\widehat{\bu_{h0}}(\xi,z)e^{-it\xi\bU_h(z)}dz=\sqrt{\frac{2\pi}{|t\xi\bU_h''(y_0)|}}\cdot\widehat{\bu_{h0}}(\xi,y_0)e^{-it\xi\bU_h(y_0)-\frac{i\pi}{4}sgn\big(\xi\bU_h''(y_0)\big)}+o(\frac{1}{|t\xi|}),\\
	&\int_{0}^{y}\big[\bU_h(y)-\bU_h(z)\big]\widehat{\bu_{h0}}(\xi,z)e^{-it\xi\bU_h(z)}dz\\
	&=\sqrt{\frac{2\pi}{|t\xi\bU_h''(y_0)|}}\cdot\big[\bU_h(y)-\bU_h(y_0)\big]\widehat{\bu_{h0}}(\xi,y_0)e^{-it\xi\bU_h(y_0)-\frac{i\pi}{4}sgn\big(\xi\bU_h''(y_0)\big)}+o(\frac{1}{|t\xi|}).
	\end{split}	\end{equation}
	Then, combining \eqref{fourier_tu} with \eqref{st-ph} yields that for $\xi$ being in a bounded interval,  $\xi\in[a,b]$ with $0<a<b$, the following inequalities hold for sufficiently large $t$ (independent of $\xi$),
	\begin{equation}\label{as-ex}\begin{split}
	&\big|\widehat{\tilde{\bu}_h}(t,\xi,y)-\widehat{\bu_{h0}}(\xi,y)e^{-it\xi\bU_h(y)}\big|\geq\sqrt{\frac{\pi|t\xi|}{|\bU_h''(y_0)|}}\cdot\big|\bU_h'(y)\widehat{\bu_{h0}}(\xi,y_0)\big|,\\
	&\big|\widehat{\tilde u_d}(t,\xi,y)\big|\geq\sqrt{\frac{\pi|t\xi|}{|\bU_h''(y_0)|}}\cdot\big|\xi\big[\bU_h(y)-\bU_h(y_0)\big]\widehat{\bu_{h0}}(\xi,y_0)\big|.
	\end{split}	\end{equation}
	Therefore, by Parseval's identity we obtain that for sufficiently large $t$,
	\begin{equation}\label{est-uv1}\begin{split}
		&\|\tilde \bu_h\|(t,y)\geq\sqrt{\frac{\pi t}{2|\bU_h''(y_0)|}}|\bU_h'(y)|\cdot\big\|\sqrt{|\xi|}\widehat{\bu_{h0}}(\xi,y_0)\big\|_{L^2_\xi([a,b])}\triangleq C_0 \sqrt{t}~\frac{|\bU_h'(y)|}{\sqrt{|\bU_h''(y_0)|}},\\
		&\|\tilde u_d\|(t,y)\geq\sqrt{\frac{\pi t}{2|\bU_h''(y_0)|}}\big|\bU_h(y)-\bU_h(y_0)\big|\cdot\Big\||\xi|^{\frac{3}{2}}\widehat{\bu_{h0}}(\xi,y_0)\Big\|_{L^2_\xi([a,b])}\triangleq C_1 \sqrt{t}~\frac{|\bU_h(y)-\bU_h(y_0)|}{\sqrt{|\bU_h''(y_0)|}},\\
	\end{split}\end{equation}
	where the positive constants $C_0$ and $C_1$ depend only on $y_0$ and $\bu_{h0}$. Finally, it is easy to obtain \eqref{est-uv} by substituting \eqref{est_linear2} and \eqref{est-uv1} into \eqref{decom}.
	\end{proof}

\subsection{Linearized stability of shear flows in three-dimensional problems}
We shall see that
the results on linear stability of shear flows in three-dimensional case are different from the ones in two-dimensional case given in the above subsection. By using the decomposition \eqref{decom} and the estimates of $(\bar{\bu}_h,\bar u_d)(t,x',y)$ given in  \eqref{est_linear2}, we will focus on the component $(\tilde{\bu}_h,\tilde u_d)(t,x',y)$ which satisfies the three-dimensional inviscid linearized Prandtl system \eqref{invis_prandtl}.
In analogy with
the well-posedness result and ill-posedness result  on the three-dimensional viscous Prandtl system
given in \cite{LWY1} and \cite{LWY3} respectively, we
will deduce that the structure of the shear flow $\bU_h(y)$ plays an important role on its linear stability. In fact, we have the following result:

\begin{prop}\label{thm_sta} Consider the linearized problem \eqref{pr_linear} in three space variables,
suppose that $\bU_h(y)=\big(U_1(y),U_2(y)\big)$ and the initial data are smooth, and
the norms appeared on the right hand side of the following estimate
\eqref{est_ud1} are finite.
 Let $(\bu_h,u_d,\ta)(t,x',y)$ be the solution of \eqref{pr_linear}.

(1)
If there is $k\in \R$ such that $U_2(y)=kU_1(y)$ holds for all $y\ge 0$, and
$U_1(y)$ has no critical point in $y\ge 0$,
then $\|u_d\|(t,y)$ is bounded uniformly in $t$, and satisfies the estimate:
\begin{equation}\label{est_ud1}\begin{split}
\|u_d\|(t,y)\leq&\Big|\frac{U_1(y)-U_1(0)}{U_1'(0)}\Big|\cdot\|\nabla_h\cdot\bu_{h0}\|(0) +\int_{0}^{y}\Big\{\Big|\frac{U_1(y)-U_1(z)}{U_1'(z)}\Big|\cdot\|\nabla_h\cdot\pd_y\bu_{h0}\|(z)\\
&\qquad+\Big|\frac{U_1''(z)\big(U_1(y)-U_1(z)\big)}{\big(U_1'(z)\big)^2}\Big|\cdot\|\nabla_h\cdot\bu_{h0}\|(z)\Big\}dz\\
&+2\|\nabla_h\ta_0\|_{L^2(\R^d_+)}\Big(\int_0^y\big|\bU_h(y)-\bU_h(z)\big|^2dz\Big)^{\frac{1}{2}}+M_0\Big(\|\ta_0\|_{H^{1,0}}+\|\ta_0\|_{H^{0,2}}\Big).
\end{split}\end{equation}

(2)  Assume that the initial data of \eqref{pr_linear} admits
\[
\|\nabla_h\cdot\bu_{h0}\|_{\frac{i}{2}}(y)~:=~\Big(\int_{\R^2}|\xi|^i\cdot\big|\xi\cdot\widehat{\bu_{h0}}\big|^2(\xi,y)d\xi\Big)^{\frac{1}{2}}<+\infty,\quad i=0,1,
\]
where $\widehat{\bu_{h0}}(\xi, y)$ denotes the Fourier transform of ${\bu}_{h0}(x_1,x_2, y)$ with respect to
$(x_1,x_2)$.

(2a) If there is $k\in \R$ such that $U_2(y)=kU_1(y)$ holds for all $y\ge 0$, and $U_1(y)$ has a single, non-degenerate critical point at $y=y_0>0$, then when $y> y_0$, there exists a constant $C=C\big(y,y_0,\bU_h,\bu_{h0}\big)>0$ independent of $t$, such that for sufficiently large $t$,
\begin{equation}\label{est_ud4}
\|u_d\|(t,y)~\geq~
C\sqrt{t}~\frac{|U_1(y)-U_1(y_0)|}{\sqrt{|U_1''(y_0)|}} .
\end{equation}

(2b) If for any given $k\in \R$, $U_2(y)=kU_1(y)$ does not hold for all $y\ge 0$, then there is a point $y_0> 0$ such that, when $y>y_0$ we have that for sufficiently large $t,$
\begin{equation}\label{est_ud5}\begin{split}
\|u_d\|(t,y)~\geq~C\sqrt{t}
\end{split}\end{equation}
with the constant $C=C\big(y,y_0,\bU_h,\bu_{h0}\big)>0$ independent of $t.$ Moreover, we have similar results as above for $\pd_yu_d$ and $\nabla_h\cdot\bu_h.$
\end{prop}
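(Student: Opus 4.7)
The plan is to reduce, via the decomposition~\eqref{decom} and the uniform estimate~\eqref{est_linear2} on $(\bar\bu_h,\bar u_d)$, to an analysis of $\tilde u_d$ solving the linearized inviscid Prandtl system~\eqref{invis_prandtl}. Taking the Fourier transform in $x'=(x_1,x_2)$ of the second identity in~\eqref{formu-u1} yields
\[
\widehat{\tilde u_d}(t,\xi,y)=-\int_0^y\Big\{i\xi\cdot\widehat{\bu_{h0}}(\xi,z)+t\bigl[\xi\cdot(\bU_h(y)-\bU_h(z))\bigr]\bigl(\xi\cdot\widehat{\bu_{h0}}(\xi,z)\bigr)\Big\}e^{-it\xi\cdot\bU_h(z)}\,dz,
\]
and the three parts correspond to three different ways of analyzing this oscillatory $z$-integral for appropriately chosen $\xi\in\R^2$.

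For Part (1), the hypothesis $U_2=kU_1$ collapses the phase to $-it\eta\,U_1(z)$ with $\eta:=\xi_1+k\xi_2$, and the growth-producing bracket becomes $t\eta(U_1(y)-U_1(z))$. Because $U_1'$ never vanishes, I would rewrite $-t\eta(U_1(y)-U_1(z))e^{-it\eta U_1(z)}=-i(U_1(y)-U_1(z))U_1'(z)^{-1}\,\pd_z e^{-it\eta U_1(z)}$ and integrate by parts once in $z$: the $z=y$ boundary term is zero, the $z=0$ boundary term supplies the first term of~\eqref{est_ud1}, and the $z$-derivative of $(U_1(y)-U_1(z))U_1'(z)^{-1}\,\xi\cdot\widehat{\bu_{h0}}(\xi,z)$ supplies the two integral terms. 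Parseval then delivers~\eqref{est_ud1} after re-inserting the bound on $(\bar\bu_h,\bar u_d)$ from~\eqref{est_linear2}.

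For Part (2a), the same phase $-it\eta\,U_1(z)$ now has a non-degenerate critical point at $z=y_0$, and the standard stationary-phase expansion
\[
\int_0^y F(z)e^{-it\eta U_1(z)}\,dz=\sqrt{\tfrac{2\pi}{t|\eta U_1''(y_0)|}}\,F(y_0)\,e^{-it\eta U_1(y_0)-\tfrac{i\pi}{4}\operatorname{sgn}(\eta U_1''(y_0))}+o\bigl((t|\eta|)^{-1/2}\bigr)
\]
applied to the $t$-multiplied term makes $|\widehat{\tilde u_d}|$ grow like $\sqrt{t|\eta|}\,|U_1(y)-U_1(y_0)|\,|\xi\cdot\widehat{\bu_{h0}}(\xi,y_0)|/\sqrt{|U_1''(y_0)|}$. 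Parseval over a bounded annulus in $\xi$ where $|\eta|$ is bounded away from zero then gives~\eqref{est_ud4}, exactly as in the second half of Proposition~\ref{prop_sta2d}. Part (2b) is the genuinely three-dimensional step: because $U_2$ is not a scalar multiple of $U_1$, we expect to find $y_0>0$ at which $\bU_h'(y_0)\ne 0$ and $\bU_h''(y_0)\not\parallel\bU_h'(y_0)$; choosing $\xi=r\,\bU_h'(y_0)^\perp$ with $r$ in a bounded positive interval makes $z=y_0$ a non-degenerate critical point of the phase $\xi\cdot\bU_h(z)$ with second derivative $r\,\bU_h'(y_0)^\perp\!\cdot\bU_h''(y_0)\neq 0$, and the same stationary-phase expansion then produces the $\sqrt t$ lower bound~\eqref{est_ud5}.

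The main obstacle is the selection of $y_0$ in Part (2b): the hypothesis $U_2\not\equiv kU_1$ is strictly weaker than the non-parallelism of $\bU_h'$ and $\bU_h''$ somewhere (consider $U_1(y)=y$, $U_2(y)=y+c$), so either one argues that the proposition tacitly excludes such affine profiles, or one handles the degenerate straight-line case separately through higher-order stationary-phase terms. Once $y_0$ has been secured, the parallel lower bounds for $\pd_y u_d$ and $\nabla_h\cdot\bu_h$ follow by differentiating~\eqref{formu-u} before Fourier transforming; the extra derivative only multiplies the amplitude by a bounded power of $|\xi|$, so the stationary-phase/Parseval mechanism and hence the $\sqrt t$ growth are unchanged.
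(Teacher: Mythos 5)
Your parts (1) and (2a) reproduce the paper's argument: decompose via \eqref{decom} and \eqref{est_linear2}, Fourier transform the identity \eqref{formu-u1}, reduce the phase to $-it(\xi_1+k\xi_2)U_1(z)$ when $U_2=kU_1$, and then either integrate by parts in $z$ (no critical point) or apply stationary phase at $y_0$ and use Parseval on a positive-measure set of frequencies with $\xi_1+k\xi_2$ bounded away from $0$. Aside from a sign slip in the $t$-term of your Fourier formula for $\widehat{\tilde u_d}$ (the paper has $-t[\ldots]$, which is what comes out of inverting $\frac{1}{(s+i\xi\cdot\bU_h)^2}$ together with the two factors of $i\xi$), those two parts are essentially the paper's proof.

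Part (2b) has a genuine gap. You propose taking $\xi=r\,\bU_h'(y_0)^\perp$ with $r$ in a bounded interval so that $z=y_0$ is a non-degenerate critical point. That set of $\xi$ is a ray in $\R^2$, hence Lebesgue-measure zero, so the final Parseval step
$\|\tilde u_d\|(t,y)\ge\|\widehat{\tilde u_d}(t,\cdot,y)\|_{L^2_\xi(\cdot)}$
returns zero and no lower bound survives. One has to work on a two-dimensional set of frequencies; but then the critical point of $z\mapsto\xi\cdot\bU_h(z)$ is no longer $y_0$ and moves with $\xi$. This is precisely what the paper's construction handles: it fixes a small $\delta$, chooses an interval $S_\delta$ near $y_0$ on which $U_1'>0$, $U_2'\neq0$ and $U_1'U_2''-U_2'U_1''>0$ (so $U_2'/U_1'$ is strictly monotone), and sets $I_\delta^R=\{\xi:|\xi|\le R,\ \exists\, y\in S_\delta\ \text{with}\ \xi\cdot\bU_h'(y)=0\}$. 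Monotonicity of $U_2'/U_1'$ makes the critical point $y_\xi\in S_\delta$ unique for each $\xi\in I_\delta^R$, continuity of $\bU_h'$ gives $m(I_\delta^R)>0$, and stationary phase applied at $y_\xi$ (uniformly for $\xi\in I_\delta^R$, with amplitude factors comparable to their values at $y_0$ once $\delta$ is small) feeds into Parseval over $I_\delta^R$. Your sketch skips exactly this step, so it does not yet yield \eqref{est_ud5}.

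Your observation that $U_2\not\equiv kU_1$ alone does not force $\bU_h'$ and $\bU_h''$ to be non-parallel somewhere (e.g.\ $U_1(y)=y$, $U_2(y)=y+c$) is accurate and worth flagging: inside the proof the paper quietly adds the hypothesis that $U_1,U_2$ vanish at infinity before running the contradiction argument, which rules out your affine counterexample. That extra decay assumption does not appear in the statement of Proposition~\ref{thm_sta}, and you are right that without something of that sort the existence of the required $y_0$ is not guaranteed.
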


\begin{proof}[\bf{Proof.}]
	As in the proof of Proposition \ref{prop_sta2d},
	by using \eqref{decom} and \eqref{est_linear2} we only need to study $(\tilde\bu_h,\tilde u_d)(t,x',y)$, which solves the three-dimensional linearized inviscid Prandtl equations \eqref{invis_prandtl}.
	
	By taking the Fourier transform with respect to $x'=(x_1, x_2)^T$ in the representation \eqref{formu-u1} of $\tilde u_d(t,x',y)$, we obtain that
	\begin{equation}\label{fourier_u}\begin{split}
	\widehat{\tilde u_d}(t,\xi,y)= &-\int_{0}^{y} \Big\{i\xi\cdot\widehat{\bu_{h0}}(\xi,y)-t\big[\xi\cdot\big(\bU_h(y)-\bU_h(z)\big)\big]\cdot\big[\xi\cdot\widehat{\bu_{h0}}(\xi,z)
	\big]\Big\}e^{-it\xi\cdot\bU_h(z)}dz.
	\end{split}\end{equation}
	
 (1)  When $U_2(y)=kU_1(y)$ for some constant $k\in\R$, then \eqref{fourier_u} is reduced as
  \begin{equation}\label{four}\begin{split}
  &\widehat{\tilde u_d}(t,\xi,y)=-\int_{0}^{y}\big[1-t(\xi_1+k\xi_2)\big(U_1(y)-U_1(z)\big)\big]\big[ \xi\cdot\widehat{\bu_{h0}}(\xi,z)\big]e^{-it(\xi_1+k\xi_2)U_1(z)}dz.
  \end{split}\end{equation}

If $U_1(y)$ has no any critical point for all $y\ge 0$, then when  $\xi_1+k\xi_2\neq 0$, we obtain that by integration by parts,
\begin{equation}\label{four_1} \begin{split}
  \widehat{\tilde u_d}(t,\xi,y)=&-\frac{U_1(y)-U_1(0)}{U_1'(0)}\big[i \xi\cdot\widehat{\bu_{h0}}(\xi,0)\big]e^{-it(\xi_1+k\xi_2)U_1(0)} -\int_{0}^{y}\Big\{\frac{U_1(y)-U_1(z)}{U_1'(z)}\big[i \xi\cdot\pd_y\widehat{\bu_{h0}}(\xi,z)\big]\\
  &\qquad\qquad-\frac{U_1''(z)\big(U_1(y)-U_1(z)\big)}{\big(U_1'(z)\big)^2}\big[i \xi\cdot\widehat{\bu_{h0}}(\xi,z)\big]\Big\}e^{-it(\xi_1+k\xi_2)U_1(z)}dz.
\end{split}\end{equation}
which implies that by Parseval's identity,
\begin{equation}\label{estimate_ud}\begin{split}
\|\tilde u_d\|(t,y)\leq&\Big|\frac{U_1(y)-U_1(0)}{U_1'(0)}\Big|\cdot\|\nabla_h\cdot\bu_{h0}\|(0) +\int_{0}^{y}\Big\{\Big|\frac{U_1(y)-U_1(z)}{U_1'(z)}\Big|\cdot\|\nabla_h\cdot\pd_y\bu_{h0}\|(z)\\
&\qquad+\Big|\frac{U_1''(z)\big(U_1(y)-U_1(z)\big)}{\big(U_1'(z)\big)^2}\Big|\cdot\|\nabla_h\cdot\bu_{h0}\|(z)\Big\}dz.
\end{split}\end{equation}
Thus, by plugging \eqref{est_linear2} and the above estimate \eqref{estimate_ud} into \eqref{decom} we conclude \eqref{est_ud1}.

(2)
If $U_2(y)=kU_1(y)$ for some constant $k$ and $U_1(y)$ has a non-degenerate critical point at $y=y_0$, then for $y>y_0$ and $\xi_1\neq-k\xi_2$, by the method of stationary phase it yields that
as $|t(\xi_1+k\xi_2)|\rightarrow+\infty$,
\begin{equation}\label{mop}\begin{split}
&\int_{0}^{y}\big(U_1(y)-U_1(z)\big)\big[\xi\cdot\widehat{\bu_{h0}}(\xi,z)\big]e^{-it(\xi_1+k\xi_2)U_1(z)}dz\\
&=\sqrt{\frac{2\pi}{|t(\xi_1+k\xi_2)U''_1(y_0)|}}~\big(U_1(y)-U_1(y_0)\big)\big[\xi\cdot\widehat{\bu_{h0}}(\xi,y_0)\big]\exp\big\{-it(\xi_1+k\xi_2)U_1(y_0)-\frac{i\pi}{4}sgn\big((\xi_1+k\xi_2)U_1''(y_0)\big)\big\}\\
&\quad+o(\frac{1}{|t(\xi_1+k\xi_2)|}).
\end{split}\end{equation}
Substituting the above estimate into \eqref{four} we obtain that for  $\xi\in S$, with a bounded domain $S\subset\R^2$ being away from the line $\{\xi|~ \xi_1+k\xi_2=0\}$,
\begin{equation}\label{estimate_ud1}
	\|\widehat{\tilde u_d}\|(t,y)~\geq~\sqrt{\frac{\pi|\xi_1+k\xi_2| t}{|U''_1(y_0)|}}\cdot\Big|\big(U_1(y)-U_1(y_0)\big)\big[\xi\cdot\widehat{\bu_{h0}}(\xi,y_0)\big]\Big|,\qquad \mbox{for~sufficiently~large}~t,
\end{equation}
which implies by using Parseval's identity,
\begin{equation}\label{estimate_ud2}
\|\tilde u_d\|(t,y)~\geq~\sqrt{\pi t}~\frac{|U_1(y)-U_1(y_0)|}{\sqrt{U''_1(y_0)}}~
\|\sqrt{\xi_1+k\xi_2}~\xi\cdot\widehat{\bu_{h0}}(\xi,y_0)
\|_{L^2_\xi(S)}
\end{equation}
for $y>y_0$ and $t$ large.
Then, from the uniform boundedness of $\|\bar u_d\|(t,y)$  with respect to $t$ obtained in \eqref{est_linear2}, we deduce \eqref{est_ud4} immediately.

(3)
If for any given $k\in \R$, $U_2(y)=kU_1(y)$ does not hold for all $y\ge 0$, 
and both of $U_1(y), U_2(y)$ vanish at infinity, then by a contradiction argument, one can show that
there is a point $y_0$ such that
\begin{equation}\label{condition}
U_1'(y_0)U_2''(y_0)~\neq~U_2'(y_0)U_1''(y_0).
\end{equation}
Without loss of generality, we may assume that $U_1'(y_0)>0$ and $U_1'(y_0)U_2''(y_0)-U_2'(y_0)U_1''(y_0)>0.$ Then, we affirm that for any $\delta>0$, there is an interval $S_\delta\subseteq(y_0-\delta,y_0+\delta)$ such that
\begin{equation}\label{def_S}
U_1'(y)>0,\quad U_2'(y)\neq0,\quad U_1'(y)U_2''(y)-U_2'(y)U_1''(y)>0,\qquad\forall~y\in S_\delta,
\end{equation}
which implies that the function $\frac{U_2'(y)}{U_1'(y)}$ is monotonically increasing in $S_\delta.$

Denote by
\begin{equation}\label{def_I}
I_\delta^R~:=~\Big\{\xi=(\xi_1,\xi_2)\in\R^2\setminus\{0\}:~|\xi|\leq R,~\mbox{and}~\exists y\in S_\delta,~s.t.~~\xi\cdot\bU_h'(y)=0\Big\}.
\end{equation}
From the monotonicity of $\frac{U_2'(y)}{U_1'(y)}$ in $S_\delta$, we know that
	 for fixed $\xi\in I_\delta^R$, there is only one point $y\in S_\delta$ satisfying $\xi\cdot\bU_h'(y)=0$.
 Moreover, by virtue of the continuity of $\bU_h'(y)$, it is easy to know that the Lebesgue measure of $I_\delta^R$ is positive, i.e., $m (I_\delta^R)>0.$
Thus, when $y>y_0$ and for any $\xi\in I_\delta^R$ with $\delta\leq y-y_0$, we have $S_\delta\subseteq(0,y),$ and there exists a unique $y_\xi\in S_\delta$ such that $\xi\cdot\bU_h'(y_\xi)=0$ and $\xi\cdot\bU_h''(y_\xi)\neq0$ by using \eqref{def_S}.  For such $(\xi,y)$ it yields that by the method of stationary phase, as $t\rightarrow+\infty$,
\begin{equation}\label{estimate_ud3}\begin{split}
	&\int_{0}^{y}\big[\xi\cdot\big(\bU_h(y)-\bU_h(z)\big)\big]\cdot\big[\xi\cdot\widehat{\bu_{h0}}(\xi,z)\big]~e^{-it\xi\cdot\bU_h(z)}dz\\
	&=\sqrt{\frac{2\pi}{t\big|\xi\cdot\bU_h''(y_\xi)\big|}}~\big[\xi\cdot\big(\bU_h(y)-\bU_h(y_\xi)\big)\big]\cdot\big[\xi\cdot\widehat{\bu_{h0}}(\xi,y_\xi)\big]~e^{-it\xi\cdot\bU_h(y_\xi)-\frac{i\pi}{4}sgn\big(\xi\cdot\bU''_h(y_\xi)\big)}+o(\frac{1}{t}).
\end{split}\end{equation}
Note that when $\delta$ is small enough, we have that for any $\xi\in I_\delta^R,$
\[
\Big|\frac{\xi\cdot\big(\bU_h(y)-\bU_h(y_\xi)\big)}{\sqrt{\big|\xi\cdot\bU_h''(y_\xi)\big|}}\big[\xi\cdot\widehat{\bu_{h0}}(\xi,y_\xi)\big]\Big|~\geq~\Big|\frac{\xi\cdot\big(\bU_h(y)-\bU_h(y_0)\big)}{2\sqrt{\big|\xi\cdot\bU_h''(y_0)\big|}}\big[\xi\cdot\widehat{\bu_{h0}}(\xi,y_0)\big]\Big|,
\]
and then, substituting \eqref{estimate_ud3} into \eqref{fourier_u} implies that for $\xi\in I_\delta^R$ and $t$ large enough,
\begin{equation}\label{mop1}\begin{split}
\big|\widehat{\tilde u_d}(t,\xi,y)\big|~\geq~&
\frac{\sqrt{\pi t}}{2}~\Big|\frac{\xi\cdot\big(\bU_h(y)-\bU_h(y_0)\big)}{\sqrt{\big|\xi\cdot\bU_h''(y_0)\big|}}\big[\xi\cdot\widehat{\bu_{h0}}(\xi,y_0)\big]\Big|.
\end{split}\end{equation}
Thus, for sufficiently large $t$ we obtain that by using Parseval's identity in \eqref{mop1},
\[\begin{split}
&\|\tilde u_d\|(t,y)\geq\frac{\sqrt{\pi t}}{2}~\Big\|\frac{\xi\cdot\big(\bU_h(y)-\bU_h(y_0)\big)}{\sqrt{\big|\xi\cdot\bU_h''(y_0)\big|}}\big[\xi\cdot\widehat{\bu_{h0}}(\xi,y_0)\big]\Big\|_{L^2_\xi(I_\delta^R)},
\end{split}\]
and then, combining with the uniform boundedness of $\|\bar u_d\|(t,y)$ given in \eqref{est_linear2},
it implies that,
\[
\|u_d\|(t,y)\geq\frac{\sqrt{\pi t}}{4}~\Big\|\frac{\xi\cdot\big(\bU_h(y)-\bU_h(y_0)\big)}{\sqrt{\big|\xi\cdot\bU_h''(y_0)\big|}}\big[\xi\cdot\widehat{\bu_{h0}}(\xi,y_0)\big]\Big\|_{L^2_\xi(I_\delta^R)}.
\]
Consequently, we get the estimate \eqref{est_ud5}. Through analogous arguments as above, we can obtain similar results for $\partial_yu_d$ and $\nabla_h\cdot\bu_h$.

\end{proof}

\begin{remark}\label{rem_sta}
 From Proposition \ref{thm_sta}, we see that when the velocity field direction of the background shear flow $(U_1(y), U_2(y), 0)$ is invariant in the normal variable, the linearized problem  \eqref{pr_linear} is asymptotically stable when the tangential velocity $U_1(y)$ is monotonic, and unstable when it has a non-degenerate critical point,
on the other hand, when the velocity field direction of $(U_1(y), U_2(y), 0)$ changes with respect to $y$, then
the problem  \eqref{pr_linear}
is always asymptotically unstable.
 This interesting phenomenon is analogy to the stability and instability results
obtained by authors in \cite{LWY1, LWY2, LWY3}
for the three dimensional incompressible Prandtl equations.

\end{remark}

\section*{Appendix. Derivation of the boundary layer problem}
\setcounter{equation}{0}
\renewcommand{\theequation}{A.\arabic{equation}}



In the appendix, we  give a formal derivation of the problem \eqref{pr_invis} for the thermal layer profiles in the zero heat conductivity limit of inviscid compressible flows. Analogously,
this problem of the thermal layer profiles can also be derived from the compressible Navier
Stokes equations when the viscosity coefficients are of higher order with respect to the heat conductivity coefficient.

  Consider the following problem of the compressible Euler-Fourier equations in the domain $\R_+\times\R^d_+$ 
with $d=2,3,$
\begin{equation}\label{oreq}
\begin{cases}
\partial_t\rho+\nabla\cdot(\rho \textbf{u})=0,\\
\rho\{
\partial_t\textbf{u}+(\textbf{u}\cdot\nabla)\textbf{u}\}+\nabla
p(\rho,\theta) =0,\\
c_V\rho\{\partial_t\theta+(\textbf{u}\cdot\nabla)\theta\}+p(\rho,\theta)\nabla\cdot \textbf{u} =\ep\Delta\ta,
\end{cases}\end{equation}
where the spatial variables $x=(x',x_d)\in\R_+^d$ with $x'=(x_1,\cdots,x_{d-1})\in\R^{d-1}$ and $x_d>0$, $\rho$ is the density,  $\textbf{u}=(u_1,\cdots,u_d)^T$ is the velocity, $\theta$ is the absolute temperature, $p(\rho,\theta)$ is the pressure,
the constant $c_V>0$ is the specific heat capacity,
$\ep$ is the coefficient of heat conduction.
For the equations \eqref{oreq}, we endow them with the following 
boundary conditions:
\begin{equation}\label{bd_oreq}
u_d|_{x_d=0}=0,\quad \big[\alpha\pd_{x_d}\ta+\beta\ta\big]\big|_{x_d=0}=\gamma,
\end{equation}
where 
$\alpha=\alpha(t,x'),\beta=\beta(t,x')$ and $\gamma=\gamma(t,x')$ are given functions. 
For simplicity, we consider the ideal gas model for the problem \eqref{oreq}-\eqref{bd_oreq}, i.e.,
\(p(\rho,\theta)=R\rho\theta\)
with a positive constant $R$. We are concerned with the asymptotic behavior of the solution $(\rho,\textbf{u},\ta)(t,x)$ to the problem \eqref{oreq}-\eqref{bd_oreq}
when 
the heat conduction coefficient $\ep$ tends to zero.

Formally, when $\epz$, the equations \eqref{oreq} goes to the following
compressible non-isentropic Euler equations in $\R_+\times\R^d_+:$
\begin{equation}\label{eq_e}
\begin{cases}
\partial_t\rho^e+\nabla\cdot(\rho^e \textbf{u}^e)=0,\\
\rho^e\{
\partial_t\textbf{u}^e+(\textbf{u}^e\cdot\nabla)\textbf{u}^e\}+R\nabla
(\rho^e\theta^e) =0,\\
c_V\rho^e\{\partial_t\theta^e+(\textbf{u}^e\cdot\nabla)\theta^e\}+R\rho^e\theta^e(\nabla\cdot \textbf{u}^e)=0.
\end{cases}\end{equation}
From the impenetrable condition given in \eqref{bd_oreq}, we know that
the condition \begin{equation}\label{bd_e}
u_d^e|_{x_d=0}=0,
\end{equation}
is a reasonable one to determine the flow described by \eqref{eq_e}, as the flow moves sliply on the boundary $\{x_d=0\}$. In particular, we do not impose any constrain of temperature on the boundary.

The inconsistent of boundary conditions between \eqref{bd_oreq} and \eqref{bd_e} leads to the appearance
of boundary layers near the physical boundary $\{x_d=0\}$, in which the termperture shall change rapidly. Since in the problem \eqref{oreq}-\eqref{bd_oreq}, the heat diffusion is important in the boundary layer and should be balanced by the convection, meanwhile note that the vertical component of the velocity field vanishes at the boundary, then as in \cite{prandtl, LWY4, Gerard, Gerard2}, the
boundary layer is of the characteristic type, and the size of boundary layer is of order $\mathcal{O}(\sep)$.
Therefore, we assume that near the boundary, the solution of \eqref{oreq}-\eqref{bd_oreq} has the form of
\begin{equation}\label{form}
(\rho,\bu,\ta)(t,x)=\Big(\rho^\ep,\bu_h^\ep,\sep u_d^\ep,\ta^\ep\Big)(t,x',\frac{x_d}{\sep})
\end{equation}
with
\(\bu_h^\ep=(u_1^\ep,\cdots,u_{d-1}^\ep)^T\). In these new variables, the problem \eqref{oreq}-\eqref{bd_oreq} is transformed into the following one in
$\{(t,x',y): t>0,x'\in\R^{d-1},y>0\}$ with $y=\frac{x_d}{\sep}$:
\begin{equation}\label{pr_ep}
\begin{cases}
\partial_t\rho^\ep+\nabla_h\cdot(\rho^\ep \bu_h^\ep)+\pd_y(\rho^\ep u_d^\ep)=0,\\
\rho^\ep\{
\partial_t\textbf{u}^\ep_h+(\textbf{u}^\ep_h\cdot\nabla_h+u_d^\ep\pd_y)\textbf{u}^\ep_h\}+R\nabla_h
(\rho^\ep \theta^\ep) =0,\\
\rho^\ep\{
\partial_t u_d^\ep+(\textbf{u}^\ep_h\cdot\nabla_h+u_d^\ep\pd_y)u_d^\ep\}+\frac{R\pd_y
	(\rho^\ep \theta^\ep) }{\ep}=0,\\
c_V\rho^\ep\{\partial_t\theta^\ep+(\textbf{u}^\ep_h\cdot\nabla_h+u_d^\ep\pd_y)\theta^\ep\}+R\rho^\ep\theta^\ep(\nabla_h\cdot \textbf{u}_h^\ep+\pd_yu_d^\ep) =\ep\Delta_h\ta^\ep+\pd_y^2\ta^\ep,\\
u_d^\ep|_{y=0}=0,~\big[\frac{\alpha}{\sep}\pd_y\ta^\ep+ \beta\ta^\ep\big]|_{y=0}=\gamma,
\end{cases}\end{equation}
where
$\nabla_h=(\pd_{x_1},\cdots,\pd_{x_{d-1}})^T$, $\Delta_h=\pd_{x_1}^2+\cdots+\pd_{x_{d-1}}^2$.


Inspired by the Prandtl boundary layer theory of  incompressible flows given in \cite{prandtl},
we assume that the solution of \eqref{pr_ep} can be approximated as follows:
\begin{equation}\label{ansatz}
(\rho^\ep,\bu_h^\ep,u_d^\ep,\ta^\ep)(t,x',y)=(\rho^e,\bu_h^e,\frac{u_d^e}{\sep},\ta^e)(t,x',\sep y)+(\rho^b,\bu_h^b,u_d^b,\ta^b)(t,x',y)+O(\sep),
\end{equation}
where $(\rho^e,\bu^e,\ta^e)$ denotes the Euler flow given by \eqref{eq_e}-\eqref{bd_e} with $\bu^e=(\bu_h^e,u_d^e)^T,$ and the boundary layer profiles $(\rho^b,\bu_h^b,u_d^b,\ta^b)(t,x',y)$ decrease rapidly as $y\rightarrow+\infty$.

Obviously, from \eqref{ansatz} we have
	\begin{equation}\label{ansatz1} (\rho^\ep,\bu_h^\ep,u_d^\ep,\ta^\ep)(t,x',y)=(\rho,\bu_h,u_d,\ta)(t,x',y)+O(\sep).
	\end{equation}
where
\[(\rho,\bu_h,u_d,\ta)(t,x',y)~:=~(\rho^e,\bu_h^e,y\pd_{x_d}u_d^e,\ta^e)(t,x',0)
+(\rho^b,\bu_h^b,u_d^b,\ta^b)(t,x',y)\]
are the boundary layer profiles.

Plugging the ansatz \eqref{ansatz1} into the problem \eqref{pr_ep} and collecting the leading terms in $\ep$,
we obtain the following problem in $\{(t,x',y)|~ t>0, x'\in \R^{d-1}, y>0\}$:
\begin{equation}\label{pr_p}
\begin{cases}
\partial_t\rho+\nabla_h\cdot(\rho \bu_h)+\pd_y(\rho u_d)=0,\\
\rho\{
\partial_t\textbf{u}_h+(\textbf{u}_h\cdot\nabla_h+u_d\pd_y)\textbf{u}_h\}+R\nabla_h
(\rho \theta) =0,\\
\pd_y(\rho\ta)=0,\\
c_V\rho\{\partial_t\theta+(\textbf{u}_h\cdot\nabla_h+u_d\pd_y)\theta\}+R\rho\theta(\nabla_h\cdot \textbf{u}_h+\pd_yu_d) =\pd_y^2\ta,\\
u_d|_{y=0}=0,\quad\liy(\rho,\bu_h,\ta)=(\rho^e,\bu_h^e,\ta^e)(t,x',0),
\end{cases}\end{equation}
and the boundary values for $\ta:$
\begin{equation}\label{bd_ta}\begin{cases}
\pd_y\ta|_{y=0}=0,\qquad&{\rm when}\quad \alpha\neq0,\\
\ta|_{y=0}=\ta^0(t,x'),\qquad&{\rm when}\quad \alpha=0,
\end{cases}\end{equation}
with $\ta^0(t,x'):=\frac{\gamma(t,x')}{\beta(t,x')}$ provided $\beta\neq0.$

Firstly, we immediately obtain that from the third equation and boundary conditions given in \eqref{pr_p},
\begin{equation}\label{bd_pre}
(\rho\ta)(t,x',y)~\equiv~(\rho^e\ta^e)(t,x',0)~=~\frac{p^e(t,x',0)}{R},
\end{equation}
where $p^e$ is the pressure of the Euler flow. It means that the leading term of the pressure does not change in boundary layers.
Next, for the problem \eqref{pr_p} endowed with the Neumann boundary condition for $\ta$ given in \eqref{bd_ta}, i.e., $\alpha\neq0, \pd_y\ta|_{y=0}=0,$ one can check that
\[
(\rho,\bu_h,\ta)(t,x',y)~=~(\rho^e,\bu_h^e,\ta^e)(t,x',0),\quad u_d(t,x',y)=y\pd_{x_d}u_d^e(t,x',0)
\]
is a special solution to \eqref{pr_p}.
Indeed, it can be easily verified by restricting the equations \eqref{eq_e} to the boundary $\{x_d=0\}$ and using the boundary condition \eqref{bd_e}. This shows that when $\alpha\neq 0$ in the boundary condition \eqref{bd_oreq}, the leading term of boundary layer profiles does not appear and the thermal layer for  the compressible system \eqref{oreq} shall be `weak'.  Usually, it is not true when we use the Dirichlet boundary condition $\ta|_{y=0}=\theta^0(t,x')$ given in \eqref{bd_ta} for the problem \eqref{pr_p}, that is to say, the thermal layer for the system \eqref{oreq} is `strong' when it is endowed with the boundary condition \eqref{bd_oreq} with $\alpha=0$.
Therefore, we focus on the case of $\alpha=0$ in the following.

 Plugging \eqref{bd_pre} into the problem \eqref{pr_p}-\eqref{bd_ta}, it follows that $(\bu_h,u_d,\ta)(t,x',y)$
satisfies the following problem in $\R_+\times\R^d_+$:
\begin{equation}\label{pr_p1}\begin{cases}
\pd_t \bu_h+(\bu_h\cdot\nabla_h+u_d\pd_y)\bu_h+\frac{R\ta}{P}\nabla_hP=0,\\
\pd_t \ta+(\bu_h\cdot\nabla_h+u_d\pd_y)\ta
=\frac{R}{(R+c_V)P}\ta\big( \pd_y^2\ta+\bu_h\cdot\nabla_hP+P_t\big),\\
\nabla_h\cdot\bu_h+\pd_y u_d=\frac{ R}{(R+c_V)P} \pd_y^2\ta-\frac{c_V}{(R+c_V)P}\big(\bu_h\cdot\nabla_hP+ P_t\big),\\
(u_d,\ta)|_{y=0}=\big(0,\ta^0(t,x')\big),
\quad
\lim\limits_{\yinf}(\bu_h,\ta)=(\textbf{U}_h,\Ta)(t,x),
\end{cases}\end{equation}
where
$$(P,\textbf{U}_h,\Ta)(t,x')~=~(p^e,\bu_h^e,\ta^e)(t,x',0)$$
are given by the Euler flow, and satisfy the following equations derived from  \eqref{eq_e}-\eqref{bd_e},
\begin{equation}\label{Ber}\begin{cases}
\pd_t\textbf{U}_h+\textbf{U}_h\cdot\nabla_h\textbf{U}_h+\frac{R\Ta}{P}\nabla_hP=0,\\
\pd_t\Ta+\textbf{U}_h\cdot\nabla_h\Ta-\frac{R\Ta}{(R+c_V)P}\cdot(P_t+\textbf{U}_h\cdot\nabla_hP)=0.
\end{cases}\end{equation}
Then, we endow the problem \eqref{pr_p1} with the initial data
\begin{equation}\label{initial}
(\bu_h,\ta)(0,x',y)~=~(\bu_{h0},\ta_0)(x',y).
\end{equation}
If the initial data $\bu_{h0}$ satisfies the compatibility condition
\begin{equation}\label{ass_init}
\liy \bu_{h0}~=~\textbf{U}_h(0,x'),
\end{equation}
 we observe that the constrain of $\bu_h$ as $y\rightarrow+\infty$ in \eqref{pr_p1} can be removed, since the condition $\liy\bu_h=\textbf{U}_h(t,x')$ holds trivially from $\eqref{Ber}_1$ and \eqref{ass_init}, provided that $\bu_h$ has a limit when $y\to +\infty$. Therefore,
we conclude the following initial-boundary value problem for the inviscid Prandtl equations coupled with a degenerate parabolic equation in $\R_+\times\R^d_+$:
\begin{equation}\label{pr_bd1}\begin{cases}
\pd_t \bu_h+(\bu_h\cdot\nabla_h+u_d\pd_y)\bu_h+\frac{R\ta}{P}\nabla_hP=0,\\
\pd_t \ta+(\bu_h\cdot\nabla_h+u_d\pd_y)\ta
=\frac{\ka\ta}{P}\big( \pd_y^2\ta+\bu_h\cdot \nabla_hP+P_t\big),\\
\nabla_h\cdot\bu_h+\pd_y u_d=\frac{\ka}{P} \pd_y^2\ta-\frac{1-\ka}{P}\big(\bu_h\cdot \nabla_hP+ P_t\big),\\
(u_d,\ta)|_{y=0}=\big(0,\ta^0(t,x')\big),
\quad\lim\limits_{\yinf}\ta(t,x,y)=\Ta(t,x'),\\
(\bu_h,\ta)|_{t=0}=(\bu_{h0},\ta_0)(x',y)
\end{cases}\end{equation}
with 
the constant
\(\ka:=\frac{R}{R+c_V}.\)
Finally, we point out that the theoretic study developed in previous sections is focused on a simple case of the problem \eqref{pr_bd1}, i.e., the pressure $P(t,x')$ of the outflow is a positive function depending only on the time variable $t$, 
\[P(t,x')~\equiv~P(t)>0,\]
and thus, the problem  \eqref{pr_bd1} is simplified as the following one,
\begin{equation}\label{pr_bd2}\begin{cases}
\pd_t \bu_h+(\bu_h\cdot\nabla_h+u_d\pd_y)\bu_h=0,\\
\pd_t \ta+(\bu_h\cdot\nabla_h+u_d\pd_y)\ta
=\frac{\ka}{P}\ta \pd_y^2\ta+\frac{\ka P_t}{P}\ta,\\
\nabla_h\cdot\bu_h+\pd_y u_d=\frac{\ka}{P} \pd_y^2\ta-\frac{(1-\ka)P_t}{P},\\
(u_d,\ta)|_{y=0}=\big(0,\ta^0(t,x')\big),
\quad\lim\limits_{\yinf}\ta(t,x,y)=\Ta(t,x'),\\
(\bu_h,\ta)|_{t=0}=(\bu_{h0},\ta_0)(x',y).
\end{cases}\end{equation}

\begin{remark} In \cite{LWY4}, the authors have studied the small viscosity and heat conductivity limit for the compressible Navier-Stokes-Fourier equations with nonslip boundary condition on velocity and the same condition as given in \eqref{bd_oreq} for the temperature, and obtained that the thermal layer profiles satisfy the same problem as given in \eqref{pr_bd1},  when the viscosity goes to zero faster than the heat conductivity.

\end{remark}


{\bf Acknowledgements:}
The first two authors' research was supported in part by
National Natural Science Foundation of China (NNSFC) under Grant Nos. 91230102 and 91530114, and the second author's research was also supported by Shanghai Committee of Science and Technology under Grant No. 15XD1502300. The last author's research was supported by the General Research Fund of Hong Kong,
CityU No. 103713.

\end{document}